\documentclass[12pt,reqno,twoside]{amsart}

\usepackage[a4paper, top=4cm, bottom=3cm, left=3cm, right=3cm]{geometry}
\usepackage{amsmath,amssymb,amsthm,mathtools, manfnt}

\newtheorem{thm}{Theorem}[section]
\newtheorem{lem}[thm]{Lemma}
\newtheorem{prop}[thm]{Proposition}
\theoremstyle{definition}
\newtheorem*{thm*}{Theorem}

\newcommand{\legendre}[2]{\left(\frac{#1}{#2} \right)}
\newcommand{\Q}{\mathbb{Q}}
\newcommand{\E}{\mathbb{E}}
\newcommand{\R}{\mathbb{R}}
\newcommand{\C}{\mathbb{C}}
\renewcommand{\P}{\mathbb{P}}
\DeclareMathOperator{\Res}{Res}
\DeclareMathOperator{\rad}{rad}
\newcommand{\cube}{\mbox{\mancube}}

\numberwithin{equation}{section}

\title[Central values of Hecke $L$-functions of $n$-th order characters]{Distribution of central values of Hecke $L$-functions of $n$-th order characters}
\author{Lewis McGoldrick}
\address{School of Mathematics, Georgia Institute of Technology, Atlanta, USA}
\email{lmcgoldrick6@gatech.edu}
\keywords{Hecke $L$-functions, central values, one-sided central limit theorem, fractional moments}

\begin{document}

\maketitle

\begin{abstract}
We prove a one-sided central limit theorem for $\log |L(\tfrac 12,\chi)|$ in a thin family of $n$-th order Hecke characters, for $n \geq 3$. The proof combines the method of Radziwiłł and Soundararajan with a recent asymptotic formula of Diaconu, Ion, Paşol, and Popa for the twisted second moment with a power-saving error term. We also obtain upper bounds of the conjectured order of magnitude for the $2k$-th moments for $0< k< 1$.
\end{abstract}

\section{Introduction}
A fundamental theorem due to Selberg \cite{selberg} shows that the distribution of $\log |\zeta(\frac 12+it)|$ is approximately described by a Gaussian with mean 0 and variance $\frac 12 \log\log |t|$. More recently, Radziwiłł and Soundararajan \cite{radziwill-soundararajan} gave a simpler proof, inspired by their work \cite{maks-sound} on the analogous problem for families of $L$-functions. 
\begin{thm*}[\cite{radziwill-soundararajan}, Theorem 1]
Let $V$ be a fixed positive real number. Then as $T \to \infty$, uniformly for all $v \in [-V,V]$,
\[
\frac 1T \operatorname{meas} \left\{ T \leq t \leq 2T \ : \ \frac{\log |\zeta(\tfrac 12 + it)|}{\sqrt{\tfrac 12 \log\log T}} \geq v  \right\} \sim \frac{1}{\sqrt{2\pi}} \int_v^\infty e^{-u^2/2}du.
\]
\end{thm*}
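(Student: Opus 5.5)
We follow the Radziwiłł--Soundararajan strategy: compare $\log|\zeta(\tfrac12+it)|$ with a short Dirichlet polynomial over primes, prove a CLT for that polynomial by moments, and pass from the polynomial to $\zeta$ using mollification and a second-moment estimate.

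\textbf{Step 1: the model polynomial.} Set $W=(\log\log\log T)^4$, $X=T^{1/(\log\log\log T)^2}$ and
\[
\mathcal P(t)=\sum_{2\le p\le X}\frac{\operatorname{Re}p^{-1/2-it}}{1}.
\]
Discard the prime squares, which contribute $O(1)$ on average. By computing moments (expanding $|\sum_p a_p p^{-it}|^{2k}$ and using that off-diagonal terms are negligible when $X^{k}\le T^{1/2}$, say), the distribution of $\mathcal P(t)$ on $[T,2T]$ is Gaussian. Its mean is $0$ and its variance is $\tfrac12\sum_{p\le X}1/p\sim\tfrac12\log\log T$. Since $\log\log X=\log\log T+O(\log\log\log\log T)$, this normalization is correct. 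The primes $p\le W$ can also be separated out; their contribution is $O(\sqrt{\log\log W})=o(\sqrt{\log\log T})$ on most of the range.

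\textbf{Step 2: smoothing.} Shift to $\sigma_0=\tfrac12+W/\log T$. For most $t$, a contour-shift argument gives
\[
\log|\zeta(\sigma_0+it)|=\operatorname{Re}\sum_{p\le X}p^{-\sigma_0-it}+o(\sqrt{\log\log T}).
\]
Alternatively, one can integrate $\log|\zeta(\sigma+it)|$ against a kernel in $t$ and use the $L^2$ mean value in $t$ of $\log|\zeta(\sigma_0+it)|-\log|\zeta(\tfrac12+it)|$. The change from $p^{-1/2}$ to $p^{-\sigma_0}$ in $\mathcal P$ is negligible for $p\le X$, because $X^{W/\log T}$ is bounded.

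\textbf{Step 3: the one-sided bound and its converse.} This is the main obstacle: relating $\zeta$ at $\tfrac12$ to $\zeta$ at $\sigma_0$ without any zero-density input. Following Radziwiłł--Soundararajan, let $M(s)$ be a truncated Euler product mollifier approximating $\exp(-\mathcal P)$. We show
\[
\int_T^{2T}|1-\zeta(\tfrac12+it)M(\tfrac12+it)|^2dt=o(T).
\]
This uses the twisted second moment of $\zeta$, which is available with a power-saving error when the mollifier length is $T^{\varepsilon}$. It follows that $\zeta M\approx1$ for most $t$, so $\log|\zeta(\tfrac12+it)|=\operatorname{Re}\mathcal P(t)+o(\sqrt{\log\log T})$ outside a set of measure $o(T)$. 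The hard part is making the mollifier a genuine approximation of $\exp(-\mathcal P)$ on most $t$. We handle this by splitting the primes into ranges $(X_{j-1},X_j]$ with rapidly growing $X_j$ and truncating the Taylor series of $\exp$ in each range at length about $\log\log$ scale. On the set where each partial prime sum is small, these truncations agree with the exponential; the complementary set is shown to have small measure using the moments from Step 1.

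\textbf{Step 4: conclusion.} Since $\mathcal P(t)/\sqrt{\tfrac12\log\log T}$ is asymptotically standard Gaussian and the error terms are $o(1)$ after normalization, the stated asymptotic follows uniformly for $v\in[-V,V]$.
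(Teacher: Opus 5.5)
The paper does not prove this theorem; it only quotes it from \cite{radziwill-soundararajan}. Measured against the Radziwiłł--Soundararajan proof, your Step 3 has a genuine gap. The estimate $\int_T^{2T}|1-\zeta(\tfrac12+it)M(\tfrac12+it)|^2\,dt=o(T)$ is false for a mollifier of length $T^{o(1)}$. On the critical line the primes in $(X,T]$ are not mollified, and the twisted second moment gives $\int_T^{2T}|\zeta M(\tfrac12+it)|^2\,dt\asymp T\log T/\log X=T(\log\log\log T)^2$. Meanwhile $\int_T^{2T}\zeta M(\tfrac12+it)\,dt\sim T$, so $\int_T^{2T}|1-\zeta M|^2\,dt\gg T$. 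What the second moment at $\tfrac12$ does give, via Markov's inequality, is $\log|\zeta(\tfrac12+it)|\le\Re\mathcal P(t)+\varepsilon\sqrt{\log\log T}$ outside a set of measure $o(T)$. That is only the upper-tail inequality. It is exactly the one-sided situation of Theorem \ref{clt} and Proposition \ref{kequals-1} in this paper, where the analogous expectation is $\ll(\log\log X)^2$ rather than $1+o(1)$. Nothing in your argument rules out $|\zeta(\tfrac12+it)|$ being much smaller than $e^{\Re\mathcal P(t)}$ for a positive proportion of $t$, for instance near zeros. So the lower tail, and with it the two-sided asymptotic $\sim$, is not established.

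The missing ingredient is Proposition 1 of \cite{radziwill-soundararajan}: with $\sigma_0=\tfrac12+W/\log T$,
$\int_T^{2T}\bigl|\log|\zeta(\tfrac12+it)|-\log|\zeta(\sigma_0+it)|\bigr|\,dt\ll(\sigma_0-\tfrac12)T\log T=WT$.
The proof writes the difference, via the Hadamard product, as $\sum_\rho\log|(\sigma_0+it-\rho)/(\tfrac12+it-\rho)|+O((\sigma_0-\tfrac12)\log T)$. Each zero then contributes at most $\pi(\sigma_0-\tfrac12)$ to the $t$-integral, with decay like $(t-\gamma)^{-2}$ away from its ordinate. Summing with $N(t+1)-N(t)\ll\log t$ finishes it, and no zero-density estimate is needed. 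Your Step 2 only gestures at this comparison (``contour shift'', ``$L^2$ mean value''), but it is the real content and must be proved.

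The mollifier estimate should then be carried out at $\sigma_0$, where $\int_T^{2T}|1-\zeta M(\sigma_0+it)|^2\,dt=o(T)$ genuinely holds. There $(\sigma_0-\tfrac12)\log X=(\log\log\log T)^2\to\infty$, so the Euler product over $p\le X$ captures $\zeta(\sigma_0+it)$ in mean square, and the dual term carries the factor $T^{1-2\sigma_0}=e^{-2W}$. The same computation shows your remark in Step 2 that $X^{W/\log T}$ is bounded is wrong. It equals $e^{(\log\log\log T)^2}$, and it must tend to infinity for the method to work. Instead, run the moment computation for $\Re\mathcal P(\sigma_0+it)$ directly; its variance is $\tfrac12\sum_{p\le X}p^{-2\sigma_0}=\tfrac12\log\log T+O(\log W)$, which still gives the right normalization.
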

In the setting of families, Katz and Sarnak's conjectural description of the distribution of low-lying zeros in families of $L$-functions \cite{katz-sarnak} led Keating and Snaith \cite{keating-snaith} to conjecture analogues of Selberg's theorem for the central values in certain families of $L$-functions. One can view Selberg's theorem as a statement about the distribution of central values of the family of $L$-functions $\zeta(s+it)$ obtained by varying the archimedean parameter $t$. In contrast, Keating and Snaith's conjectures concern the analogous distribution obtained from varying the non-archimedean parameter, e.g. a Dirichlet character.

Unlike Selberg's theorem, the full Keating-Snaith conjectures remain out of reach. In his paper on the moments of the Riemann zeta function \cite{soundararajan-moments}, Soundararajan observed that his methods, assuming the Generalized Riemann Hypothesis (GRH), would yield a one-sided version of Keating and Snaith's conjecture. Subsequently, Hough obtained an unconditional one-sided result for an orthogonal and a symplectic family of $L$-functions. For example, let $s(D)$ be the set of odd, square-free $d$ bounded by $\tfrac{D}{2}<d \leq D$, and set $\chi_d(n)=\legendre dn$ the Kronecker symbol, with corresponding Dirichlet $L$-function $L(s,\chi_d)$. Then the second part of Hough's Corollary 1.1 gives the following upper-tail bound.
\begin{thm*}[\cite{hough}, Corollary 1.1]
For fixed $A$ as $D \to \infty$,
\[
\P\left[ d \in s(D)  : \frac{\log \left| L\left( \tfrac 12, \chi_{8d} \right) \right| - \tfrac 12 \log\log D }{\sqrt{\log\log D}} > A \right] \leq \frac{1}{\sqrt{2\pi}}\int_A^\infty e^{-\frac{x^2}{2}}dx+o_A(1).
\]
\end{thm*}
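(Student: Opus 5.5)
To prove Hough's upper-tail bound I would follow the Radziwiłł--Soundararajan template, which only needs a twisted first moment in this symplectic family. The idea is to approximate $\log|L(\tfrac12,\chi_{8d})|$ from above by a short Dirichlet polynomial over primes, and to show that this polynomial is approximately Gaussian with mean $\tfrac12\log\log D$ and variance $\log\log D$.

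\textbf{Setup.} Set $x = D^{1/(\log\log D)^{2}}$ (any choice with $\log x/\log D \to 0$ slowly works) and
\[
P(d)=\sum_{p\le x}\frac{\chi_{8d}(p)}{\sqrt p}.
\]
The prime-square terms contribute $\sum_{p\le x}\chi_{8d}(p^2)/(2p)=\tfrac12\log\log x+O(1)$ to the expected expansion of $\log L$. This is where the mean $\tfrac12\log\log D$ comes from, since $\log\log x=\log\log D+O(\log\log\log D)$.

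\textbf{Step 1: $P(d)$ is Gaussian.} By the method of moments, using orthogonality of $\chi_{8d}$ over square-free $d\in s(D)$ (Pólya--Vinogradov or Poisson summation), I would show that $P(d)/\sqrt{\log\log D}$ has moments matching a standard normal. The point is that $\sum_{d}\chi_{8d}(m)$ is small unless $m$ is a square. For $k\ll \log\log D$, the $k$-th power of $P$ has length $x^k<D^{1/2}$, so the error terms are acceptable.

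\textbf{Step 2: approximate the $L$-value by its Euler product.} Write $\mathcal N(d)=\exp(-P(d))$, or rather a truncated Taylor polynomial of this exponential so that it is a short Dirichlet polynomial. The key analytic input is the twisted first moment
\[
\sum_{d\in s(D)} L(\tfrac12,\chi_{8d})\,\chi_{8d}(\ell)
\]
for $\ell\le D^{\theta}$, with a main term of the shape $|s(D)|\,\tfrac{1}{\sqrt{\ell_1}}\,(\log D)\,(\ldots)$, where $\ell=\ell_1\ell_2^2$. This is classical (Jutila; Soundararajan's mollification) and holds with a power saving. I would use it to show
\[
\E\left[\,L(\tfrac12,\chi_{8d})\,\mathcal N(d)\,\right]\ll (\log D)^{1/2}\cdot e^{O(\log\log\log D)}
\]
on average. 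Equivalently, $\E[\,|L|\,e^{-P}\,] \ll (\log D)^{1/2+o(1)}$, which is the statement that $L\,e^{-P}$ is typically $(\log D)^{1/2+o(1)}$. Since $L(\tfrac12,\chi_{8d})\ge0$ is known for real characters (only nonnegativity on a set of full density is needed, and in any case we bound $|L|$ above), Markov's inequality then gives
\[
\log|L(\tfrac12,\chi_{8d})|\le P(d)+\tfrac12\log\log D+o(\sqrt{\log\log D})
\]
outside a set of $d$ of density $o(1)$. Combined with Step 1, this yields the stated inequality for each fixed $A$. The event $\log|L|=-\infty$ only helps, since the bound is one-sided.

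\textbf{Main obstacle.} The hardest step is the twisted first moment computation in Step 2. One must evaluate $\sum_{\ell}a_\ell\,\E[L\chi_{8d}(\ell)]$ for the coefficients $a_\ell$ of the truncated $e^{-P}$, and show that the main terms reassemble into an Euler product of size $(\log D)^{1/2}$. Concretely, the main term is roughly $\log D\cdot\prod_{p\le x}(1-1/p+\ldots)$, so the diagonal factors must cancel $\log D$ down to $(\log D)^{1/2}$. This requires tracking the $\ell_1$ versus square contributions precisely. To keep $\ell$ within the power-saving range, I would split $[1,x]$ into blocks $I_j$ with rapidly growing endpoints and truncate each Taylor expansion of $e^{-P_j}$ at length $\approx 10\,\mathrm{Var}(P_j)$, as in Radziwiłł--Soundararajan. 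The $d$ with some $|P_j|$ unusually large are handled separately by a moment bound.
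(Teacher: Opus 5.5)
\textbf{Gap: the nonnegativity claim in Step 2 is false, so the first-moment Markov argument fails.} The paper does not prove this statement. It quotes it from Hough, whose proof follows Selberg's method with zero-density estimates. You take the Radziwi\l{}\l{}--Soundararajan route instead. Your Step~1 and the overall Markov-inequality structure are sound, but Step~2 rests on the claim that $L(\tfrac12,\chi_{8d})\ge 0$ is known, and it is not. Nonnegativity of $L(\tfrac12,\chi_{8d})$ follows from GRH, since there are then no real zeros in $(\tfrac12,1)$. Unconditionally it is known only for a positive proportion of $d$ (Conrey--Soundararajan).

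The twisted first moment computes $\E[L(\tfrac12,\chi_{8d})\,E_\ell(-P(d))]$ with the sign of $L$ included. So your ``equivalently, $\E[|L|e^{-P}]\ll(\log D)^{1/2+o(1)}$'' does not follow. Markov's inequality needs the expectation of the nonnegative quantity $|L|e^{-P}$, which you have not controlled. Your fallback remarks do not repair this. Nonnegativity on a density-one set would still allow large negative values on the exceptional set to cancel the positive part of the first moment. And ``bounding $|L|$ above'' requires a moment of $|L|$, which a first moment of $L$ does not give. This is exactly why a first moment suffices for Radziwi\l{}\l{}--Soundararajan's quadratic twists of elliptic curves, where Waldspurger's formula gives $L(\tfrac12,E_d)\ge 0$, but not for $\chi_{8d}$.

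\textbf{Fix: work with the square, as the paper does for $\mathcal F_n$.} Since $\chi_{8d}$ is real, $L(\tfrac12,\chi_{8d})^2=|L(\tfrac12,\chi_{8d})|^2\ge 0$. The twisted second moment $\sum_d L(\tfrac12,\chi_{8d})^2\chi_{8d}(\ell)$ is known with a power saving for $\ell\le D^{\theta}$ (Soundararajan).
\begin{itemize}
\item Its main term is $\ell_1^{-1/2}$ times an arithmetic factor times a cubic polynomial in $\log D$ and $\log\ell_1$.
\item Summing against the coefficients of $E_\ell(-2P)$ produces Euler factors $1-2/p+O(p^{-3/2})$, so the main term is $\ll(\log D)^3(\log x)^{-2}=(\log D)(\log\log D)^{O(1)}$.
\item The powers of $\log\ell_1$ are handled by Cauchy's estimate in $s$, as in the proof of Proposition~\ref{kequals-1}.
\end{itemize}
Apply Markov's inequality to $L^2E_\ell(-2P)$ at threshold $(\log D)e^{2\varepsilon\sqrt{\log\log D}}$. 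This gives $\log|L(\tfrac12,\chi_{8d})|\le P(d)+\tfrac12\log\log D+\varepsilon\sqrt{\log\log D}$ outside a set of density $o(1)$. The rest of your argument then goes through unchanged.
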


His proof follows Selberg's original method, and thus relies on zero-density estimates. An alternative method, avoiding such estimates, was provided by Radziwiłł and Soundararajan in \cite{maks-sound}. They describe a robust method that works for a wide range of families, provided that one can prove asymptotics for the twisted first moments with a power-saving error term. They also give an interpolation argument showing that such asymptotics for a single moment imply upper bounds of the expected size for all smaller moments.

In a recent work on non-vanishing for cubic Hecke $L$-functions, David, de Faveri, Dunn, and Stucky \cite{david-defaveri-dunn-stucky} obtained an asymptotic for the second moment with a power-saving error term for the thin family of cubic characters coming from the power residue symbol for square-free modulus. Following this, a similar result for quartic characters was proved by Castillo, de Faveri, and Dunn in \cite{castillo-defaveri-dunn}. The generalization to higher order characters was recently given by Diaconu, Ion, Paşol, and Popa in  \cite{diaconu-ion-pasol-popa}, where they obtained a general twisted second moment asymptotic for the thin family of $n$-th order characters for all $n \geq 3$ with a power-saving error term. Using this second moment asymptotic, along with the method of Radziwiłł and  Soundararajan, we prove a one-sided version of the Keating-Snaith analogue of Selberg's theorem for the thin family $\mathcal F_n$ of $n$-th order characters for $n \geq 3$. We also obtain upper bounds of the expected size for the $2k$-th moments, $0 <k < 1$.

Throughout the paper, we fix the order $n \geq 3$, a number field $K$ containing the $2n$-th roots of unity, and a finite set of prime ideals $S$, along with some fixed character data described in the following section. We denote by $I(S)$ the integral ideals coprime to every prime in $S$, and think of ideals $\mathfrak q \in I(S)$ as indices for Hecke characters $\chi_\mathfrak q$ that are constructed explicitly in Section 2.1, and let $\mathcal F_n=\{\mathfrak q \in I(S) : \mathfrak q \text{ is square-free}\}$. The family of $L$-functions attached to characters $\chi_\mathfrak q$ for $\mathfrak q \in \mathcal F_n$ is unitary, and the distribution of $\log |L(\tfrac 12, \chi_\mathfrak q)|$ is predicted to be Gaussian with mean $0$ and variance $\tfrac 12 \log\log N(\mathfrak q)$, where we assume the convention $\log 0 = -\infty$. 
\begin{thm}
\label{clt}
Let $V$ be a fixed real number. Then as $X \to \infty$,
\begin{multline*}
\left| \left\{\mathfrak q \in \mathcal F_n \ : \ N(\mathfrak q) \leq X, \frac{\log |L(\frac 12, \chi_\mathfrak q)|}{\sqrt{\tfrac 12\log\log X}} \geq V \right\} \right|
\\
\leq \left( \frac{1}{\sqrt{2\pi}}\int_{V}^\infty e^{-\frac{x^2}{2}}dx +o(1) \right) | \{\mathfrak q \in \mathcal F_n \ : \  N(\mathfrak q) \leq X \} |.
\end{multline*}
\end{thm}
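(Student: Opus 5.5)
We follow the Radziwiłł–Soundararajan method: approximate $\log|L(\tfrac12,\chi_\mathfrak q)|$ from above by a short Dirichlet polynomial over prime ideals, and show that this polynomial is Gaussian with the right variance using moments. Only an upper bound is needed, so no zero-density input is required. The one analytic input is the twisted second moment asymptotic of Diaconu, Ion, Paşol and Popa, with power saving. We use it in place of the twisted first moment used in \cite{maks-sound}.

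\textbf{Setup.} Let $x = X^{1/(\log\log\log X)^2}$, or any choice with $\log x/\log X \to 0$ slowly. Set
\[
P(\mathfrak q) = \operatorname{Re} \sum_{N(\mathfrak p)\le x} \frac{\chi_\mathfrak q(\mathfrak p)}{N(\mathfrak p)^{1/2}},
\]
summing over prime ideals $\mathfrak p \notin S$.

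\textbf{Step 1: moments of $P$.} We compute the moments of $P(\mathfrak q)$ over $\mathfrak q\in\mathcal F_n$ with $N(\mathfrak q)\le X$. Expanding $P^k$ for $k \ll \log X/\log x$ gives sums of $\chi_\mathfrak q(\mathfrak a)\overline{\chi_\mathfrak q(\mathfrak b)}$ with $N(\mathfrak a\mathfrak b)\le x^{k}$ small. Averaging over square-free $\mathfrak q$, a Pólya–Vinogradov or large sieve estimate for the $n$-th power residue symbol (via reciprocity) shows that the average is a main term plus error $O(X^{1-\delta}x^{k})$. The main term is nonzero only when $\mathfrak a\bar{\mathfrak b}^{-1}$ is "trivial", i.e. when $\mathfrak a\mathfrak b^{n-1}$ is an $n$-th power up to $S$-units. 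Since $n\ge3$, the diagonal pairing each $\chi(\mathfrak p)$ with a $\overline{\chi}(\mathfrak p)$ dominates. Configurations like $\chi(\mathfrak p)^n$ need $n \geq 3$ copies of a prime and contribute lower order. The resulting moments match those of a Gaussian with variance
\[
\tfrac12\sum_{N(\mathfrak p)\le x} N(\mathfrak p)^{-1} = \tfrac12\log\log X + O(\log\log\log X)
\]
by Landau's prime ideal theorem. So $P(\mathfrak q)/\sqrt{\tfrac12\log\log X}$ is asymptotically standard normal.

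\textbf{Step 2: the upper bound.} We show that for most $\mathfrak q$,
\[
\log|L(\tfrac12,\chi_\mathfrak q)| \le P(\mathfrak q) + O\big((\log\log\log X)^{C}\big).
\]
As in \cite{maks-sound}, we introduce the mollifier-type polynomial
\[
M(\mathfrak q) = \sum_{k\le K} \frac{(-P(\mathfrak q))^k}{k!},
\]
with $K$ a large multiple of $\log\log X$ in a suitable range. More precisely we split the primes into ranges $(x_{j-1},x_j]$ and truncate each exponential separately, so that $M$ is a short Dirichlet polynomial, of length at most $X^{\varepsilon}$, that approximates $\exp(-P(\mathfrak q))$ whenever each piece $P_j(\mathfrak q)$ is not too large. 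The exceptional $\mathfrak q$, where some $P_j$ is large, form an $o(1)$ proportion by moment bounds as in Step 1.

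The key claim is
\[
\sum_{\mathfrak q} |L(\tfrac12,\chi_\mathfrak q)|^2\, |M(\mathfrak q)|^2 \ll |\{\mathfrak q : N(\mathfrak q)\le X\}| \cdot (\log\log X)^{C}.
\]
To prove it, expand $|M|^2$ as a short Dirichlet polynomial and insert the twisted second moment formula of \cite{diaconu-ion-pasol-popa}. Its power-saving error is harmless because $M$ is short. The main term is a sum over twists $\mathfrak a,\mathfrak b$ of an arithmetic factor times $\log X$-type terms. Evaluating it as an Euler product shows that $|M|^2$ cancels the $\log X$ growth from $|L|^2$, up to the factor $e^{O(\sum_{x<N(\mathfrak p)\le X}1/N(\mathfrak p))} = (\log\log\log X)^{O(1)}$, together with the truncation losses. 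Given the claim, Chebyshev's inequality yields $|L(\tfrac12,\chi_\mathfrak q)|\,|M(\mathfrak q)| \le (\log\log X)^{C'}$ for all but an $o(1)$ proportion of $\mathfrak q$. Since $|M(\mathfrak q)| \asymp e^{-P(\mathfrak q)}$ on non-exceptional $\mathfrak q$, this gives
\[
\log|L| \le P(\mathfrak q) + O(\log\log\log X),
\]
which is $o(\sqrt{\log\log X})$.

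\textbf{Step 3: conclusion.} Combining Steps 1 and 2, for fixed $V$ the proportion of $\mathfrak q$ with $\log|L(\tfrac12,\chi_\mathfrak q)| \ge V\sqrt{\tfrac12\log\log X}$ is at most the proportion with $P(\mathfrak q) \ge (V-\varepsilon)\sqrt{\tfrac12\log\log X}$, plus $o(1)$. By Step 1 this is the Gaussian tail $\frac1{\sqrt{2\pi}}\int_{V-\varepsilon}^\infty e^{-x^2/2}\,dx + o(1)$. Letting $\varepsilon\to0$ gives the theorem.

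\textbf{Main obstacle.} We expect the hardest step to be the main-term computation in Step 2. The DIPP asymptotic has a complicated main term, involving the arithmetic of $n$-th power residue symbols and possibly secondary terms or Gauss-sum contributions that need not be small. One must verify that after twisting by $\mathfrak a\bar{\mathfrak b}$ and summing against the coefficients of $|M|^2$, the main term factorizes into an Euler product of the expected size $\asymp \exp\bigl(\sum_{x<N(\mathfrak p)\le X} N(\mathfrak p)^{-1}\bigr)$. We would first isolate the diagonal part of the twisted main term, with $\mathfrak a\mathfrak b^{-1}$ an $n$-th power, and check multiplicativity. Then we would show that any off-diagonal or secondary terms are bounded by the same quantity, using the shortness of $M$.
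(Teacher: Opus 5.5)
Your overall architecture is the paper's. You get Gaussian moments for a short prime sum via reciprocity and convexity bounds (Proposition~\ref{pmoments}). You then apply Chebyshev/Markov to $|L(\tfrac12,\chi_\mathfrak q)|^2$ times a truncated $e^{-2\Re\mathcal P(\mathfrak q)}$, bounded using the Diaconu--Ion--Pa\c{s}ol--Popa twisted second moment (Proposition~\ref{kequals-1}). Your multi-range split with $x=X^{1/(\log\log\log X)^2}$ is unnecessary. The paper uses the single range $N(\mathfrak p)\le X^{1/(\log\log X)^2}$ with $\ell=20\lceil\log\log X\rceil$ and accepts a loss of $(\log\log X)^2$. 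That loss is harmless for the same reason as in your own Chebyshev step: $\log\left((\log\log X)^C\right)=o(\sqrt{\log\log X})$.

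The gap is the key claim of Step 2. You assert it but do not derive it, and the derivation you outline under ``Main obstacle'' would fail. You plan to isolate the twists with $\mathfrak m\mathfrak n^{n-1}$ an $n$-th power and bound everything else by the target size. For the second moment, however, non-$n$-th-power twists carry main terms. In Lemma~\ref{expected_twistedlem}, the main term of $\E_X^\Phi[|L(\tfrac12,\chi_\mathfrak q)|^2\chi_\mathfrak q(\mathfrak m\mathfrak n^{n-1})]$ is $D\,A_1(\mathfrak m,\mathfrak n)(\log X+O(1)+A_2(\mathfrak m,\mathfrak n))$. Here $A_1$ is multiplicative, with $A_1(\mathfrak p,\mathcal O)\approx A_1(\mathcal O,\mathfrak p)\approx N(\mathfrak p)^{-1/2}$ and $A_1(\mathfrak p,\mathfrak p)\approx 1$.

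The coefficients of $E_\ell(-2\Re\mathcal P)$ (or of your $M^2$) carry $k^{\Omega(\mathfrak m\mathfrak n)}$ with $k=-1$. Against them, the local factor at each prime is $1+(k^2+2k)/N(\mathfrak p)=1-1/N(\mathfrak p)$. The $n$-th power twists alone would give $1+1/N(\mathfrak p)$. So your ``diagonal'' part has size about $\log X\cdot\log x$, and the off-diagonal part has the same size with the opposite sign. The entire saving comes from their cancellation, so the off-diagonal terms cannot be bounded separately.

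The paper instead sums the full main term as a single Euler product, via the bound \eqref{Abound}. With $L_2=X^{1/(\log\log X)^2}$ this gives $\exp(-\sum_{N(\mathfrak p)\le L_2}N(\mathfrak p)^{-1})\asymp(\log\log X)^2/\log X$, which cancels the $\log X$. It handles the $A_1A_2=\frac{d}{ds}A_1$ term by Cauchy's estimate on a disc $|s-\tfrac12|\ll 1/\log L_2$. It handles the truncation $\Omega(\mathfrak m\mathfrak n)\le\ell$ by Rankin's trick using $A_1\ge 0$, which is where the condition $20>e^2+2e+1$ enters.

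Your concern about Gauss-sum secondary terms does not arise. In the form used here, everything beyond the $A_1(\log X+\cdots)$ term sits in the power-saving error, which is negligible because the Dirichlet polynomial has length $X^{o(1)}$.
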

\begin{thm}
\label{moments}
Let $0 < k < 1$. Then as $X \to \infty$,
\[
\sum_{\substack{\mathfrak q \in \mathcal F_n \\ N(\mathfrak q) \leq X}} |L(\tfrac{1}{2}, \chi_\mathfrak q)|^{2k} \ll_{k} X(\log X)^{k^2}.
\]
\end{thm}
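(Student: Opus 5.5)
We follow the interpolation method of Radziwiłł and Soundararajan \cite{maks-sound}, with the twisted second moment asymptotic of Diaconu, Ion, Paşol, and Popa \cite{diaconu-ion-pasol-popa} as the analytic input. The input we use is this: for a Dirichlet polynomial supported on ideals $\mathfrak a$ with $N(\mathfrak a)\le X^{\theta}$, for some small fixed $\theta>0$,
\[
\sum_{\mathfrak q} |L(\tfrac12,\chi_\mathfrak q)|^2\,\overline{\chi_\mathfrak q(\mathfrak a)}
\]
has a main term of size $X\log X$ times an explicit arithmetic weight depending on $\mathfrak a$, plus a power-saving error. The key inequality is Hölder with exponents $1/k$ and $1/(1-k)$:
\[
\sum |L(\tfrac12,\chi_\mathfrak q)|^{2k}
=\sum |L|^{2k}|\mathcal P|^{-2k(1-k)}\cdot|\mathcal P|^{2k(1-k)}
\le\Big(\sum |L|^{2}|\mathcal P|^{-2(1-k)}\Big)^{k}\Big(\sum |\mathcal P|^{2k}\Big)^{1-k}.
\]
Here $\mathcal P$ should be thought of as $\exp(k\,\Re \sum_{N\mathfrak p\le z}\chi_\mathfrak q(\mathfrak p)N\mathfrak p^{-1/2})$. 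Since such an object is not a short Dirichlet polynomial, we replace it by truncated Taylor polynomials in the Soundararajan/Harper style.

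\textbf{Step 1: the mollifier.} Split the primes $N\mathfrak p\le X^{\theta}$ into ranges $(X^{\beta_{j-1}},X^{\beta_j}]$, with $\beta_0=0$, $\beta_1\approx (\log\log X)^{-2}$, $\beta_{j+1}=20\beta_j$, up to $\beta_J\le\theta$. Let $P_j(\mathfrak q)=\Re\sum_{\mathfrak p\in I_j}\chi_\mathfrak q(\mathfrak p)N\mathfrak p^{-1/2}$. Put $E_\ell(x)=\sum_{m\le\ell}x^m/m!$, and set
\[
\mathcal N(\mathfrak q;\alpha)=\prod_j E_{\ell_j}\big(\alpha P_j(\mathfrak q)\big),\qquad \ell_j\approx\beta_j^{-3/4}.
\]
The lengths are chosen so that $\mathcal N$ is a Dirichlet polynomial of length at most $X^{\theta}$. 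Following \cite{maks-sound}, we partition the family according to the first index $j$ for which $|P_j(\mathfrak q)|>\ell_j/(10e^2)$, and include the case that no such $j$ exists. On the set where $|P_i|\le \ell_i/(10e^2)$ for all $i<j$, we have $E_{\ell_i}(\alpha P_i)\asymp e^{\alpha P_i}$. We then apply the Hölder inequality above with $\mathcal N(\mathfrak q;k-1)$ and $\mathcal N(\mathfrak q;k)$ truncated at $i<j$. On the exceptional piece we also multiply by the factor $(P_j\cdot 10e^2/\ell_j)^{2r}\ge1$, with $r$ of size $\ell_j$.

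\textbf{Step 2: moment computations.} We need three bounds:
\begin{itemize}
\item[(a)] $\sum_\mathfrak q |L(\tfrac12,\chi_\mathfrak q)|^2\,|\mathcal N_{<j}(\mathfrak q;k-1)|^2\,(\text{power of }P_j)$;
\item[(b)] $\sum_\mathfrak q |\mathcal N_{<j}(\mathfrak q;k)|^2\,(\text{power of }P_j)$;
\item[(c)] the analogous sums without $L$.
\end{itemize}
For (b) and (c), expand and use orthogonality of $\chi_\mathfrak q(\mathfrak a)$ over square-free $\mathfrak q$. Only terms in which $\mathfrak a$ is an $n$-th power (times a fixed class) survive the main term; the other terms contribute a power-saving error by large sieve or Poisson-type estimates for the thin family, presumably available from \cite{diaconu-ion-pasol-popa} via the twisted first moment with $\mathfrak a$ replaced by $1$. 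Because $n\ge3$, $\chi_\mathfrak q(\mathfrak p)^2$ is not principal, so $|\mathcal N|^2$ behaves like a moment of independent complex Gaussians. This yields the expected $\exp(k^2\sum_{N\mathfrak p\le X^{\theta}} N\mathfrak p^{-1}/1)$-type sizes, that is, $(\log X)^{k^2}$ and $(\log X)^{(k-1)^2}$, together with a saving $e^{-c\ell_j}$ from the $P_j$ moments. For (a), insert the twisted second moment main term. Its arithmetic weight acts on $\chi(\mathfrak p)$ roughly as a multiplication by $N\mathfrak p^{-1/2}$, which shifts the Gaussian means. Completing the square then gives
\[
X\log X\cdot(\log X)^{(k-1)^2-1+\ldots}=X(\log X)^{(k-1)^2+2(k-1)+1}.
\]
Combining via Hölder, $(X(\log X)^{k^2})^{k}(X(\log X)^{k^2})^{1-k}=X(\log X)^{k^2}$, where the exponents combine as in \cite{maks-sound}. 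Summing over $j$, the factor $e^{-c\ell_j}$ beats the loss $\beta_j^{-1}$ coming from $\log X$ versus $\log X^{\beta_j}$, so the sum over $j$ converges.

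\textbf{Main obstacle.} The hard step is (a): extracting from \cite{diaconu-ion-pasol-popa} a usable explicit form of the twisted main term, whose weights involve Gauss sums and the correction polynomial characteristic of $n$-th order families, and verifying that for prime-supported twists its multiplicative structure is as above. Everything else is bookkeeping, provided the power saving tolerates twists of length $X^{\theta}$. I would first write the main term for $\mathfrak a$ square-free as $X\log X\cdot g(\mathfrak a)N\mathfrak a^{-1/2}\cdot(\text{smooth factor})$, and bound the secondary terms trivially using the restriction $\ell_j\le\beta_j^{-3/4}$.
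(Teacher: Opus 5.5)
Your overall scheme is sound and is essentially the integrated form of the paper's. Applying H\"older with exponents $1/k$ and $1/(1-k)$ on each piece of the partition by the first large $P_j$ is equivalent, for this purpose, to the pointwise Young inequality of Lemma~\ref{youngbound}. Its tail factors $(e^2x_{r+1}/\ell_{r+1})^{\ell_{r+1}}$ play the role of your $(10e^2P_j/\ell_j)^{2r}$, and your quantities (a) and (b) are exactly what Propositions~\ref{bound1} and~\ref{bound2} control.

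Two small corrections. First, the weight must be $\mathcal P=\exp(\Re\sum\chi_\mathfrak q(\mathfrak p)N\mathfrak p^{-1/2})$, not $\exp(k\Re\cdots)$; with the latter, H\"older only yields $(\log X)^{k^2+k(1-k)^3}$. Your $\mathcal N(\cdot;k-1)$ and $\mathcal N(\cdot;k)$ already correspond to the correct choice, so this is only a slip. Second, orthogonality in (b) needs nothing from the twisted moment. Reciprocity (Lemma~\ref{reciprocitylem}), after Fourier-expanding the Hilbert-symbol factor over $H_\mathfrak c$, turns $\chi_\mathfrak q(\mathfrak a)$ into a Hecke character of $\mathfrak q$. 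That character is non-principal unless $\mathfrak a$ is an exact $n$-th power (no ``fixed class'' survives), and a convexity bound then gives Lemma~\ref{expected_character}.

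The genuine gap is step (a): the plan you sketch for it would not give the exponent $k^2$, for two reasons.

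\emph{The twists are not square-free.} Expanding $\mathcal N_{<j}(\mathfrak q;k-1)^2$ produces twists $\mathfrak m\mathfrak n^{n-1}$ with arbitrary exponents. Pairing $\chi_\mathfrak q(\mathfrak p)$ with $\overline{\chi_\mathfrak q(\mathfrak p)}$ gives $\mathfrak p^n$. The saving $(\log X)^{k^2-1}$ appears only inside the full Euler factor $1+((k-1)^2+2(k-1))/N\mathfrak p+\cdots$, where these diagonal terms combine with the single-prime twists $\mathfrak p$, $\mathfrak p^{n-1}$ of weight $\approx N\mathfrak p^{-1/2}$. The coefficients carry the alternating sign $(k-1)^{\Omega(\mathfrak m\mathfrak n)}$, so you cannot compute the weight only for square-free $\mathfrak a$ and bound the rest trivially; taking absolute values turns the Euler factor into $1+(1-k)(3-k)/N\mathfrak p$. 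What is needed is:
\begin{itemize}
\item the main-term weight for every residue of $v_\mathfrak p$ modulo $n$ (the paper's $h(\mathfrak p^j,s)$ and $g(\mathfrak p,s)$, which involve no Gauss sums);
\item its multiplicativity and nonnegativity;
\item Rankin's trick block by block, to remove the truncations $\Omega(\mathfrak m_j\mathfrak n_j)\le\ell_j$ without losing the signs.
\end{itemize}

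\emph{The main term has a derivative part.} The main term in Lemma~\ref{expected_twistedlem} is not $X\log X$ times a weight. It is $A_1(\log X+c)+\partial_sA_1$ at $s=\tfrac12$. Here $A_2=\partial_sA_1/A_1$ is as large as $\log N(\mathfrak m\mathfrak n)$, i.e.\ comparable to $\log X$ for twists of length $X^{\theta}$. So this term is of the same order as the leading one, and for the same sign reason it cannot be bounded termwise. The paper proves the Euler-product bound \eqref{Abound} uniformly for complex $s$ with $|s-\tfrac12|\ll 1/\log L_2$. It then applies Cauchy's estimate to obtain $\sum A_1A_2(k-1)^{\Omega}(\cdots)\ll(\log X)^{k^2}$.

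With these two ingredients supplied, your H\"older/Harper scheme goes through.
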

Analogous lower bounds to Theorem \ref{clt} appear to be substantially more difficult. However, recently Radziwiłł and Soundararajan developed a technique complementary to that of \cite{maks-sound} to obtain lower bounds conditional on GRH \cite{radziwill-soundararajan-conditional}. This has already essentially been adapted to the family $\mathcal F_3$ for $K=\Q(\omega)$ by Lin and Wong in \cite{lin-wong}, where they extended results on the 1-level density of zeros by David and Güloğlu \cite{david-guloglu}.	
\subsection*{Acknowledgements}
The author thanks Alexander Dunn for suggesting the project, and for his guidance and encouragement.
\section{Preliminaries}
\subsection{Character construction}
We follow the definitions in \cite{friedberg-hoffstein-lieman} and \cite{diaconu-ion-pasol-popa} to construct a family of $n$-th order Hecke $L$-functions. 

Let $K$ be a number field containing the $2n$-th roots of unity, with ring of integers $\mathcal O$. We shall fix the following data throughout the whole paper, which we define in this section: $S$, $\mathfrak c$, $\mathcal E$, $\mathcal E_0$, $m_{E_0}$. Let $S$ be a finite set of primes of $K$, containing all primes ramified in $K/\Q$, and such that the localization $\mathcal O_{S}$ has class number one. Then we define $I(S)$ to be the set of integral ideals coprime to every prime in $S$. We think of an element $\mathfrak q \in I(S)$ as an index for a character $\chi_\mathfrak q$ which we shall now define.

We start with the $n$-th power residue symbol $\legendre{.}{.}$, defined locally for an element $q \in K^\times$ coprime to a prime $\mathfrak p\not\in S$ to be the $n$-th root of unity satisfying
\[
\legendre{q}{\mathfrak p} \equiv q^{(N(\mathfrak p)-1)/n} \bmod \mathfrak p,
\]
and extended multiplicatively to an arbitrary integral ideal $\mathfrak n\in I(S)$ coprime to $q$ by 
\[
\legendre{q}{\mathfrak n} = \prod_{\mathfrak p \mid \mathfrak n} \legendre{q}{\mathfrak p}^{v_\mathfrak p(\mathfrak n)}.
\]
To define $\chi_\mathfrak q(\mathfrak n)$ we must make a consistent choice of an element $q$. First, fix an ideal $\mathfrak c = \prod_{\mathfrak p \in S} \mathfrak p^{\nu_\mathfrak p},$ where $\nu_\mathfrak p$ is chosen large enough so that $q \equiv 1 \bmod \mathfrak c$ implies $q \in (K_\mathfrak p^\times)^n$ for all $\mathfrak p \in S$. Consider the group $R_\mathfrak c = H_\mathfrak c/(H_\mathfrak c)^n$, where $H_\mathfrak c$ is the ray class group modulo $\mathfrak c$. This group is a product of cyclic groups $R_\mathfrak c \cong C_1 \times C_2 \times \dots$. For each cyclic group $C_i$, we choose one generator $[E_0]$, $E_0 \in I(S)$, and denote by $\mathcal E_0$ the set of these $E_0$. Now we obtain a full set of representatives $\mathcal E$ of $R_\mathfrak c$ by taking elements of the form $E = \prod_{E_0 \in \mathcal E_0} E_0^{v_{E_0}}$.

Since $\mathcal O_S$ has class number one, for each representative $E_0 \in \mathcal E_0$ we pick an element $m_{E_0} \in \mathcal O$ such that $m_{E_0} \mathcal O_S = E_0 \mathcal O_S$. We extend this to any $E \in \mathcal E$ by setting $m_E = \prod_{E_0 \in \mathcal E_0} m_{E_0}^{v_{E_0}}$.

Now given a $\mathfrak q$, we pick a corresponding $E \in \mathcal E$ so that $[\mathfrak q] = [E]$ in $R_\mathfrak c$. Consequently, we can write $\mathfrak q E^{-1} =\mathfrak g^n$ in $H_\mathfrak c$ for some $\mathfrak g \in I(S)$, $(\mathfrak g, \mathfrak n)=1$. Therefore, there is some generator $q \equiv 1 \bmod \mathfrak c$ such that $\mathfrak q E^{-1} \mathfrak g^{-n} = (q)$. Then we write $\mathfrak q = (q) E \mathfrak g^n$, and define the character
\[
\chi_\mathfrak q(\mathfrak n) = \legendre{qm_E}{\mathfrak n},
\]
Changing our choice of $\mathfrak g$ does not affect the above value since it is raised to the power of $n$. Lastly, set $\chi_\mathfrak q(\mathfrak n)=0$ if $(\mathfrak q, \mathfrak n)\neq 1$ or $(\mathfrak n, S) \neq 1$.

We define the thin family $\mathcal F_n$ to be the set of indices $\mathfrak q \in I(S)$ that are square-free. We shall sometimes abuse notation and refer to elements of $\mathcal F_n$ as the characters $\chi_\mathfrak q$ instead.

The character $\chi_\mathfrak q$ is a Hecke character modulo $\mathfrak c \mathfrak q$ of trivial infinity type, and its conductor is the ideal $\mathfrak c_\mathfrak q \mathfrak q_0$, where $\mathfrak c_\mathfrak q \mid \mathfrak c$ is some ideal dividing $\mathfrak c$, and $\mathfrak q_0$ is the product of the prime ideals dividing the $n$-th power-free part of $\mathfrak q$ \cite{goldmakher-louvel}. Consequently, for the thin family $\mathcal F_n$, ordering characters by conductor is essentially the same as ordering them by the ideal norm of $\mathfrak q$. This is the main reason we consider the thin family, as opposed to allowing non-squarefree $\mathfrak q$.

Now we recall the reciprocity law proved in \cite{diaconu-ion-pasol-popa}.
\begin{lem}[\cite{diaconu-ion-pasol-popa}, Lemma 3.1.5.]
\label{reciprocitylem}
Let $\mathfrak a, \mathfrak b$ be coprime $n$-th power-free ideals also coprime to $S$. Let $E = [\mathfrak b]$ and $G = [\mathfrak a]$. Then
\begin{align}
\label{reciprocity}
\chi_\mathfrak b(\mathfrak a) = \chi_\mathfrak a(\mathfrak b) (m_G, m_E)_S,
\end{align}
where $(.,.)_S = \prod_{\mathfrak p \in S} (.,.)_\mathfrak p$ is the product of $n$-th local Hilbert symbols.
\end{lem}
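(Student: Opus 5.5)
The plan is to reduce \eqref{reciprocity} to Hilbert reciprocity $\prod_v (x,y)_v = 1$, applied to the elements that actually define the two characters, and then to simplify the local symbols at the primes in $S$ using the congruence conditions built into the construction.

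First I fix the decompositions of Section 2.1: $\mathfrak a = (a)\, G\, \mathfrak g^n$ and $\mathfrak b = (b)\, E\, \mathfrak h^n$, with $a \equiv b \equiv 1 \bmod \mathfrak c$. Then
\[
\chi_\mathfrak b(\mathfrak a)=\legendre{b m_E}{\mathfrak a}, \qquad \chi_\mathfrak a(\mathfrak b)=\legendre{a m_G}{\mathfrak b}.
\]
Set $x = b m_E$ and $y = a m_G$. Since $m_G\mathcal O_S = G\mathcal O_S$, the prime-to-$S$ part of the ideal $(y)$ is $\mathfrak a\mathfrak g^{-n}$. The factor $\mathfrak g^{-n}$ contributes only $n$-th powers of residue symbols, so it is invisible. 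Hence $\legendre{x}{\mathfrak a}=\legendre{x}{(y)_{S'}}$, where $(y)_{S'}$ denotes the prime-to-$S$ part, and similarly $\legendre{y}{\mathfrak b}=\legendre{y}{(x)_{S'}}$. Before doing this I will use the freedom in the choices of $\mathfrak g,\mathfrak h$ (only their classes matter) to arrange that $\mathfrak g$ is coprime to $\mathfrak b$ and $\mathfrak h$ is coprime to $\mathfrak a$. Then $x$ and $y$ are coprime away from $S$, so all residue symbols are defined.

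Next I apply the local description of Hilbert symbols. For $\mathfrak p\notin S$, $\mathfrak p \nmid n$ (these lie in $S$, since $S$ contains the ramified primes and, as required by the construction, the primes above $n$), and $x,y$ coprime at $\mathfrak p$, the tame formula gives
\[
(x,y)_\mathfrak p=\legendre{x}{\mathfrak p}^{v_\mathfrak p(y)}\legendre{y}{\mathfrak p}^{-v_\mathfrak p(x)},
\]
up to the usual sign/inversion convention, which I will fix once. The archimedean places are complex because $K \supset \mu_{2n}$ with $n \ge 3$, so their symbols are trivial. Multiplying over all places and using Hilbert reciprocity therefore yields
\[
\legendre{x}{(y)_{S'}}=\legendre{y}{(x)_{S'}}\prod_{\mathfrak p\in S}(x,y)_\mathfrak p^{\pm1}.
\]

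It remains to evaluate the $S$-factor. For $\mathfrak p\in S$ the choice of $\mathfrak c$ guarantees that $a\equiv 1\bmod\mathfrak c$ forces $a\in (K_\mathfrak p^\times)^n$, and likewise for $b$. By bimultiplicativity, $(b m_E, a m_G)_\mathfrak p=(m_E,m_G)_\mathfrak p$. Using $(u,v)_\mathfrak p=(v,u)_\mathfrak p^{-1}$ to match the orientation gives exactly $(m_G,m_E)_S$.

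I expect the main obstacle to be this last bookkeeping step. One must pin down the sign convention in the tame symbol formula so that the exponent on $(m_G,m_E)_S$ comes out as $+1$ rather than $-1$. One must also verify that the possible common support of $m_E, m_G$ with $\mathfrak a,\mathfrak b$ outside $S$ causes no trouble; this holds because that support is exactly $E$ and $G$, which the symbols absorb consistently.
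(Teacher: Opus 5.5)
The paper gives no proof of this lemma. It is quoted from \cite{diaconu-ion-pasol-popa} (Lemma 3.1.5), so there is no internal argument to compare against. Your proposal is a correct, self-contained derivation, and it is the standard one. It applies Hilbert reciprocity to $x=bm_E$, $y=am_G$. It uses the tame formula at every $\mathfrak p\notin S$; your assumption that the primes above $n$ lie in $S$ is automatic, since $\mu_{2n}\subset K$ forces every such prime to ramify in $K/\Q$. The complex places contribute trivially. Finally, $(bm_E,am_G)_\mathfrak p$ collapses to $(m_E,m_G)_\mathfrak p$ for $\mathfrak p\in S$, because $a,b\equiv 1\bmod\mathfrak c$ are local $n$-th powers there. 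Compared with the bare citation, your route makes explicit which features of the construction are used. It also shows in passing that the argument never uses that $\mathfrak a,\mathfrak b$ are $n$-th power-free.

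Two points need tidying. First, the prime-to-$S$ parts of $(x)$ and $(y)$ are $\mathfrak b\mathfrak h^{-n}$ and $\mathfrak a\mathfrak g^{-n}$. So $(\mathfrak g,\mathfrak b)=(\mathfrak h,\mathfrak a)=1$ does not by itself make $x,y$ coprime away from $S$, nor make $\legendre{x}{\mathfrak g}$ meaningful. You also need $(\mathfrak g,\mathfrak h)=1$, e.g.\ by choosing $\mathfrak h$ after $\mathfrak g$, coprime to $\mathfrak a\mathfrak g$. The freedom to re-choose $\mathfrak g,\mathfrak h$ is the well-definedness of $\chi_\mathfrak q$ asserted in the construction, so you may use it. Note that it is really a consequence of the same Hilbert-reciprocity argument, not of the exponent $n$ alone.

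Second, the exponent on the $S$-factor is fixed by the normalization of the Hilbert symbol, not by bookkeeping. With your displayed tame formula, reciprocity gives exactly
\[
\legendre{x}{\mathfrak a}=\legendre{y}{\mathfrak b}\prod_{\mathfrak p\in S}(y,x)_\mathfrak p=\chi_\mathfrak a(\mathfrak b)\,(m_G,m_E)_S .
\]
This is the normalization in which $\legendre{c}{d}=(d,c)_S\legendre{d}{c}$ for coprime $c,d\in\mathcal O_S$. The opposite normalization of the tame symbol yields $(m_E,m_G)_S$ instead. The lemma inherits its convention from \cite{diaconu-ion-pasol-popa} without stating it, so this is not a gap. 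Still, you should state your convention up front rather than appeal to antisymmetry at the end.
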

To a character $\chi_\mathfrak q$ we attach for $\Re s>1$ the Hecke $L$-function
\[
L(s,\chi_\mathfrak q) := \sum_{\mathfrak n\neq 0} \frac{\chi_\mathfrak q(\mathfrak n)}{N(\mathfrak n)^s},
\]
which extends to an entire function if $\chi_\mathfrak q$ is non-trivial. We note that this is not primitive, since we have removed the Euler factors at the bad primes in $S$.
\subsection{Probability measures}
We shall denote by $\P_{\leq X}$ the uniform probability measure defined for $E \subset \{ \mathfrak q \in \mathcal F_n \ : \  N(\mathfrak q) \leq X\}$ by
\[
\P_{\leq X}(E) = \frac{\sum_{\mathfrak q \in E}1}{\sum_{\mathfrak q \in \mathcal F_n, N(\mathfrak q) \leq X} 1}.
\]
We also define a smoothed analogue. Let $0 \leq \Phi\leq 1$ be a smooth approximation to the indicator function, supported on $[\tfrac 12,\tfrac 52]$ and equal to one on $[1,2]$. Then for $X>0$ and $E \subset \mathcal F_n$ we define
\[
\P_X^\Phi(E) := \frac{\sum_{\mathfrak q \in E} \Phi\left(\frac{N(\mathfrak q)}{X}\right)}{\sum_{\mathfrak q \in \mathcal F_n} \Phi \left( \frac{N(\mathfrak q)}{X}\right)}.
\]
Also, let $\E_{\leq X}$ and $\E_X^\Phi$ denote the expectations with respect to these probability measures.

We can now rewrite the statement of Theorem \ref{clt}:

Let $V$ be a fixed real number. Then as $X \to \infty$,
\begin{align}
\label{updatedclt}
\P_{\leq X} \left( \frac{\log |L(\frac 12, \chi_\mathfrak q)|}{\sqrt{\frac 12 \log\log X}} \geq V \right) \leq \frac{1}{\sqrt{2\pi}} \int_V^\infty e^{-\frac{x^2}{2}}dx+o(1).
\end{align}
We also have a smoothed version of Theorem \ref{moments}.
\begin{thm}
\label{smoothedmoments}
Let $0 < k < 1$. Then as $X \to \infty$,
\[
\E_X^\Phi\left[ | L(\tfrac 12, \chi_\mathfrak q)|^{2k} \right] \ll_{k,\Phi} (\log X)^{k^2}.
\]
\end{thm}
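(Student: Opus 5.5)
We follow the interpolation method of Radziwiłł and Soundararajan \cite{maks-sound}, with the twisted second moment asymptotic of \cite{diaconu-ion-pasol-popa} as the only arithmetic input. We first produce a short Dirichlet polynomial over primes that approximates $\log|L(\tfrac12,\chi_\mathfrak q)|$ from above. Then we use H\"older's inequality to trade $|L|^{2k}$ for a product of $|L|^2$ and a mollifier-type weight built from that polynomial.

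\textbf{Step 1 (setup).} Choose a sequence $\beta_0=0<\beta_1<\dots<\beta_J$ with $\beta_j=20^{j-1}/(\log\log X)^2$ for $j\geq1$, and let $J$ be the largest index with $\beta_J\leq c$ for a small constant $c>0$ dictated by the power saving in \cite{diaconu-ion-pasol-popa}. Set
\[
\mathcal P_j(\mathfrak q)=\sum_{X^{\beta_{j-1}}<N(\mathfrak p)\leq X^{\beta_j}}\frac{\chi_\mathfrak q(\mathfrak p)}{N(\mathfrak p)^{1/2}}.
\]
Note that $\Re\mathcal P_j$ is what enters $\log|L|$. For real $t$ and an even integer $\ell$ let $E_\ell(t)=\sum_{r\leq\ell}t^r/r!$, and write $\ell_j=2\lfloor 1/(10\beta_j)\rfloor$. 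The key pointwise inequality is the following. For each $\mathfrak q$, either (a) $|\Re\mathcal P_j(\mathfrak q)|\leq\ell_j/(10e^2)$ for all $j$, or (b) there is a first $j$ where this fails.

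In case (a), $e^{2k\Re\sum_j\mathcal P_j}\ll\prod_j|E_{\ell_j}(k\Re\mathcal P_j)|^2$. In case (b), we bound $|L|^{2k}$ by a product of $E_{\ell_i}$ for $i<j$, times $(|\Re\mathcal P_j|\cdot 10e^2/\ell_j)^{2s_j}$ for a large even $s_j$, times $|L|^{2k}$. This is the standard device in \cite{maks-sound}.

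\textbf{Step 2 (Hölder).} With $p=1/k$ and $q=1/(1-k)$, apply
\[
\E_X^\Phi[|L|^{2k}W]\leq \E_X^\Phi[|L|^2W']^{k}\,\E_X^\Phi[W'']^{1-k}.
\]
Here $W'=\prod_j|E_{\ell_j}((k-1)\Re\mathcal P_j)|^2$, and $W''=\prod_j|E_{\ell_j}(k\Re\mathcal P_j)|^2$ suitably combined. The point is to choose the exponents inside the $E$'s so that on the good set $|L|^{2k}\approx |L|^{2k}e^{2(k-1)k\Re P}\cdot e^{2k(1-k)\Re P}\ldots$. Concretely, the moment factor becomes $|L|^2$ times a Dirichlet polynomial approximating $e^{2(k-1)\Re\mathcal P}$, and the other factor approximates $e^{2k\Re\mathcal P}$.

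All weights are Dirichlet polynomials of length at most $X^{\sum_j\beta_j\ell_j}\leq X^{c'}$, with $c'$ as small as needed. Expanding them, the twisted second moment formula of \cite{diaconu-ion-pasol-popa} evaluates $\E_X^\Phi[|L(\tfrac12,\chi_\mathfrak q)|^2\chi_\mathfrak q(\mathfrak a)\overline{\chi_\mathfrak q(\mathfrak b)}]$ as a main term plus an error of size $X^{-\delta}N(\mathfrak{ab})^{A}$. The main term is an explicit multiplicative function of $\mathfrak a,\mathfrak b$, roughly $\log X$ times a factor $\approx 1/\sqrt{N(\mathfrak a\mathfrak b^{-1})}$ on $n$-th power relations. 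The reciprocity law of Lemma \ref{reciprocitylem} is used implicitly there.

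For the unweighted polynomials we use orthogonality: $\E_X^\Phi[\chi_\mathfrak q(\mathfrak m)]$ is $\approx 1$ when $\mathfrak m$ is an $n$-th power (times a local factor) and has a power saving otherwise. This follows from Hecke $L$-function bounds and a sieve for square-freeness, applied again via reciprocity.

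\textbf{Step 3 (computing main terms).} Since $n\geq3$, the diagonal for $|\mathcal P_j|^{2r}$ behaves like a sum of independent complex variables: $\E|\mathcal P_j|^{2r}\approx r!\,(\sum_{\mathfrak p}1/N(\mathfrak p))^r$, so $\Re\mathcal P$ has variance $\tfrac12\sum 1/N\mathfrak p$. Evaluating the Euler products gives $\E[|L|^2W']\ll (\log X)\cdot (\log X)^{(k-1)^2-1}=(\log X)^{(k-1)^2}$, interpreting the twisted main term as shifting the mean by $\Re\mathcal P$. It also gives $\E[W'']\ll(\log X)^{k^2}$. Combining, we get $(\log X)^{k(k-1)^2+(1-k)k^2}$ up to corrections — the bookkeeping must be arranged, as in \cite{maks-sound}, so that the exponents sum to exactly $k^2$. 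The bad-set contributions are controlled by $e^{-\ell_j/2}$-type savings against $(\log X)^{O(1)}$ losses, summable in $j$. The remaining primes above $X^{\beta_J}$ contribute $O(1)$ to the variance.

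\textbf{Main obstacle.} The hardest part will be Step 3 for the term involving $|L|^2$. We must verify that the main term of the twisted second moment of \cite{diaconu-ion-pasol-popa} factors so that twisting by $\prod_j E_{\ell_j}$ genuinely produces the expected Gaussian-type shift. This means checking that off-diagonal $n$-th power contributions and the Hilbert-symbol factors $(m_G,m_E)_S$ only alter constants. It also requires that the polynomial lengths fit within the range where the power-saving error term is acceptable. I would first isolate the main term as $\sum_{\mathfrak a,\mathfrak b}x_\mathfrak a\overline{x_\mathfrak b}\,M(\mathfrak a,\mathfrak b)$, and then show $M$ is multiplicative with local factors $1+O(1/N\mathfrak p)$ away from the diagonal prime-power pattern.
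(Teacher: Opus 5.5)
Your framework is a legitimate variant of the paper's. The paper uses the pointwise Young-type inequality of Lemma~\ref{youngbound}, which bounds $|L(\tfrac12,\chi_\mathfrak q)|^{2k}$ linearly by $|L|^2\prod_j\mathcal A_j+\prod_j\mathcal B_j$ plus first-failure terms. You instead apply H\"older with exponents $1/k,1/(1-k)$ to a good/bad decomposition. Both routes reduce to the same two estimates, $\E_X^\Phi[|L|^2\cdot(\text{weight}\approx e^{2(k-1)\Re\mathcal P})]\ll(\log X)^{k^2}$ and $\E_X^\Phi[\text{weight}\approx e^{2k\Re\mathcal P})]\ll(\log X)^{k^2}$, i.e.\ Propositions~\ref{bound2} and~\ref{bound1}. (You also never need a polynomial bounding $\log|L|$ from above, which would require GRH; the $|L|^2$ factor replaces it.)

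The genuine gap is Step~3, which is where the theorem actually lives. You assert $\E[|L|^2W']\ll(\log X)^{(k-1)^2}$, so H\"older gives the exponent $k(k-1)^2+(1-k)k^2=k(1-k)$. You then defer the mismatch with $k^2$ to unspecified ``bookkeeping''. No bookkeeping repairs this, because the estimate itself is wrong. For $k>\tfrac12$ the resulting bound would beat the conjectured order $(\log X)^{k^2}$, and for $k<\tfrac12$ it is too weak. Your plan for the main term, which treats it as living on $n$-th power relations with local factors $1+O(1/N\mathfrak p)$ off the diagonal, misses the term that matters.

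The main term $A_1(\mathfrak m,\mathfrak n)$ of Lemma~\ref{expected_twistedlem} is not supported on $n$-th power relations. When $v_\mathfrak p(\mathfrak m\mathfrak n^{n-1})\equiv\pm1\bmod n$, its local factor is $h(\mathfrak p,\tfrac12)=h(\mathfrak p^{n-1},\tfrac12)\approx N(\mathfrak p)^{-1/2}$. Against the coefficient $(k-1)N(\mathfrak p)^{-1/2}$ of a lone $\chi_\mathfrak q(\mathfrak p)$ or $\overline{\chi_\mathfrak q(\mathfrak p)}$, this contributes $2(k-1)/N(\mathfrak p)$. That is the same order as the diagonal term $(k-1)^2/N(\mathfrak p)$, so a mere $O(1/N\mathfrak p)$ bound here costs an uncontrolled power of $\log X$.

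Computed exactly, as in \eqref{Abound} with $k$ replaced by $k-1$, the local factors are $1+((k-1)^2+2(k-1))/N(\mathfrak p)=1+(k^2-1)/N(\mathfrak p)$. Hence $\E[|L|^2W']\ll\log X\cdot(\log X)^{k^2-1}=(\log X)^{k^2}$, and H\"older then gives $(\log X)^{k\cdot k^2+(1-k)k^2}=(\log X)^{k^2}$ exactly, with no extra bookkeeping.

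Two further points remain open in your sketch. First, you must control the $A_1A_2$ part of the main term; the paper does this with Cauchy's estimate for the $s$-derivative on a disc of radius $\asymp\ell_R^2/\log X$ around $s=\tfrac12$. Second, in the bad-set analysis you must show that the loss for a first failure at $j\ge2$ is only $\beta_{j-1}^{-O(1)}$, not $(\log X)^{O(1)}$ as you wrote. Your $\ell_J$ is bounded, so $e^{-\ell_J/2}$ cannot absorb a power of $\log X$.
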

We note that Theorem \ref{moments} follows immediately from the smoothed version after a dyadic decomposition.

As one might expect comparing the smoothed and unsmoothed theorems, the family $\mathcal F_n$ is of size $X$ after restricting to $\mathfrak q$ with $N(\mathfrak q)\leq X$. Indeed, we have the following estimate.
\begin{lem}
\label{familysizelem}
Let $\widehat{\Phi}(s) = \int_0^\infty \Phi(x) x^{s-1} dx$ denote the Mellin transform of $\Phi$. Then
\begin{align}
\label{familysize}
\sum_{\mathfrak q \in \mathcal F_n} \Phi \left( \frac{N(\mathfrak q)}{X}\right) = \prod_{\mathfrak p \in S} \left( 1 + \frac 1{N(\mathfrak p)} \right)^{-1} \frac{\widehat{\Phi}(1)\Res_{s=1}\zeta_K(s)}{\zeta_K(2)} X + O(X^{1/2+\varepsilon}).
\end{align}
\begin{proof}
By Mellin inversion,
\[
\sum_{\mathfrak q \in \mathcal F_n} \Phi\left(\frac{N(\mathfrak q)}{X} \right) = \frac{1}{2\pi i} \int_{(2)} \widehat{\Phi}(s) D(s) X^s ds,
\]
where
\[
D(s) = \prod_{\mathfrak p \in S} \left( 1 + \frac{1}{N(\mathfrak p)^s}\right)^{-1} \frac{\zeta_K(s)}{\zeta_K(2s)}.
\]
Moving the contour to the line $\Re s = \tfrac 12+\varepsilon$ we pick up a residue from passing a simple pole of $\zeta_K$ at one, corresponding to the main term in \eqref{familysize}. We bound the remaining integral using the convexity bound and rapid decay of $\widehat \Phi$, obtaining
\[
\frac{1}{2\pi i} \int_{(\tfrac 12+\varepsilon)} \widehat{\Phi}(s) D(s) X^s ds \ll X^{1/2+\varepsilon}.
\]
\end{proof}
\end{lem}
We now give proofs of the main theorems.
\section{Proofs of Theorems}
In this section, we condense the proof of the main theorems into four propositions.
\subsection{Theorem \ref{clt}}
\begin{proof}
We follow the method of Radziwiłł and Soundararajan as in \cite{maks-sound}, first approximating $\log |L(\frac 12, \chi_\mathfrak q)|$ with a short Dirichlet polynomial:
\[
\mathcal P(\mathfrak q):= \sum_{\mathfrak p \in P} \frac{\chi_\mathfrak q(\mathfrak p)}{N(\mathfrak p)^{1/2}}, \ \ \text{where } P := \{ \mathfrak p \not\in S\ : \ N(\mathfrak p) \leq X^{1/(\log\log X)^2}\}.
\]
We now show that $\Re \mathcal P(\mathfrak q)$ is asymptotically normally distributed using the method of moments. We obtain the following result on the moments of $\Re \mathcal P(\mathfrak q)$.
\begin{prop}
\label{pmoments}
Let $k \geq 1$ be a positive integer. Then for large $X$,
\[
\E_X^\Phi\left[ \left( \frac{\Re\mathcal P(\mathfrak q)}{\sqrt{ \frac 12 \log\log X}}\right)^k \right] = M_k+o(1),
\]
where 
\[
M_k := \begin{cases} (k-1)!!, & \text{ if } k \text{ is even};
\\
0, & \text{ if } k \text{ is odd}.
\end{cases}
\]

\end{prop}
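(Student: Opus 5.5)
Expand $(\Re\mathcal P(\mathfrak q))^k = 2^{-k}(\mathcal P(\mathfrak q)+\overline{\mathcal P(\mathfrak q)})^k = 2^{-k}\sum_{j=0}^k \binom kj \mathcal P(\mathfrak q)^j\overline{\mathcal P(\mathfrak q)}^{k-j}$. Each term is a sum over tuples of primes $\mathfrak p_1,\dots,\mathfrak p_j,\mathfrak p_1',\dots,\mathfrak p_{k-j}' \in P$ of
\[
\frac{\chi_\mathfrak q(\mathfrak p_1\cdots\mathfrak p_j)\overline{\chi_\mathfrak q}(\mathfrak p_1'\cdots\mathfrak p_{k-j}')}{N(\mathfrak p_1\cdots\mathfrak p_{k-j}')^{1/2}} = \frac{\chi_\mathfrak q(\mathfrak a\,\mathfrak b^{n-1})}{N(\mathfrak a\mathfrak b)^{1/2}},
\]
where $\mathfrak a=\prod\mathfrak p_i$ and $\mathfrak b=\prod\mathfrak p_i'$, using $\overline{\chi_\mathfrak q}=\chi_\mathfrak q^{n-1}$ on ideals coprime to $\mathfrak q$. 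The modulus satisfies $N(\mathfrak a\mathfrak b^{n-1})\le X^{nk/(\log\log X)^2}=X^{o(1)}$. The plan is then to evaluate $\E_X^\Phi[\chi_\mathfrak q(\mathfrak m)]$ for a fixed ideal $\mathfrak m\in I(S)$ of small norm. By the reciprocity law (Lemma \ref{reciprocitylem}), after reducing $\mathfrak m$ to its $n$-th power-free part and splitting $\mathfrak q$ into the finitely many classes $[\mathfrak q]=G\in R_\mathfrak c$, we get $\chi_\mathfrak q(\mathfrak m)=\chi_\mathfrak m(\mathfrak q)\,\epsilon(G,[\mathfrak m])$ with $|\epsilon|=1$, up to the coprimality condition $(\mathfrak q,\mathfrak m)=1$. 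Restricting to a class of $R_\mathfrak c$ is done by orthogonality of the characters of this finite group, which are Hecke characters of modulus $\mathfrak c$.

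If the $n$-th power-free part of $\mathfrak m$ is nontrivial, then $\chi_\mathfrak m$ times any such class character is a nontrivial Hecke character of conductor $\ll N(\mathfrak m)$. The sum $\sum_{\mathfrak q \text{ squarefree},(\mathfrak q,\mathfrak m S)=1}\psi(\mathfrak q)\Phi(N\mathfrak q/X)$ is then treated as in Lemma \ref{familysizelem}. Its Dirichlet series is $L(s,\psi)/L(2s,\psi^2)$ times a finite Euler product, which has no pole at $s=1$. Shifting to $\Re s=\tfrac12+\varepsilon$ and using convexity gives $O(X^{1/2+\varepsilon}N(\mathfrak m)^{1/4+\varepsilon})$, which is $X^{1/2+o(1)}$. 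If instead $\mathfrak m$ is an $n$-th power, $\chi_\mathfrak q(\mathfrak m)=1$ whenever $(\mathfrak q,\mathfrak m)=1$, and the expectation is $\prod_{\mathfrak p\mid\mathfrak m}(1+O(N\mathfrak p^{-1}))+O(X^{-1/2+\varepsilon})$. The total number of tuples weighted by $N^{-1/2}$ is at most $(\sum_{\mathfrak p\in P}N\mathfrak p^{-1/2})^k\le X^{o(1)}$, so all error terms contribute $o(1)$ overall.

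It remains to count the diagonal: the tuples for which $\mathfrak a\mathfrak b^{n-1}$ is an $n$-th power. Fix the exponents: a prime $\mathfrak p$ appearing $r$ times among the $\mathfrak p_i$ and $s$ times among the $\mathfrak p_i'$ needs $r-s\equiv 0 \pmod n$. The main contribution should come from $r=s=1$ for every prime occurring, i.e. $j=k-j$ and $\mathfrak b=\mathfrak a$ squarefree, with $(k/2)!$ matchings. This gives
\[
2^{-k}\binom{k}{k/2}\Big(\frac k2\Big)!\Big(\sum_{\mathfrak p\in P}\frac1{N\mathfrak p}\Big)^{k/2}(1+o(1)),
\]
using that $\sum_{\mathfrak p\in P} N\mathfrak p^{-1}\cdot O(N\mathfrak p^{-1})$ is bounded so the $\prod(1+O(1/N\mathfrak p))$ factors are harmless. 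Since $\sum_{\mathfrak p\in P}N\mathfrak p^{-1}=\log\log X+O(\log\log\log X)$ by the prime ideal theorem, this equals $\frac{k!}{2^{k/2}(k/2)!}(\tfrac12\log\log X)^{k/2}(1+o(1))=(k-1)!!(\tfrac12\log\log X)^{k/2}(1+o(1))$. For odd $k$ there is no such diagonal term.

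The main obstacle is bounding the remaining diagonal configurations, namely repeated primes or $r\equiv s\pmod n$ with $r\ne s$. The latter requires $r+s\ge n\ge 3$, so some prime appears at least thrice. Any repeated prime costs a convergent factor $\sum_\mathfrak p N\mathfrak p^{-r/2-s/2}$ with $r+s\ge 3$, or, when $r+s=2$ with $r\ne s$, needs $n\mid 2$, which is excluded by $n\ge3$. So these terms are $O_k((\log\log X)^{k/2-1/2})$, which is $o$ of the normalization. One must check the $n\ge 3$ hypothesis carefully here; for $n=2$, the terms $\mathfrak p^2$ from $\mathcal P^2$ would contribute a mean shift, and the absence of this shift is what yields mean $0$.
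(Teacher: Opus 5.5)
Your proposal is correct and follows essentially the same route as the paper. Both expand binomially using $\overline{\chi_\mathfrak q}=\chi_\mathfrak q^{n-1}$, and both handle the Hilbert symbol via reciprocity plus finite Fourier analysis (you use characters of $R_\mathfrak c$, the paper uses $H_\mathfrak c$). Both treat non-$n$-th-power terms by Mellin inversion and convexity, as in the paper's Lemma \ref{expected_character}, and both finish with a diagonal count where $r=s=1$ gives the Gaussian moments and every other $n$-th power configuration has fewer than $k/2$ distinct primes. Your explicit remark that $n\ge 3$ rules out $(r,s)=(2,0)$ states a point the paper leaves implicit.
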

Since the normal distribution is determined by its moments, by the method of moments, we find
\[
\P_X^\Phi \left( \frac{\Re \mathcal P(\mathfrak q)}{\sqrt{\frac 12 \log\log X}} \geq V \right) = \frac{1}{\sqrt{2\pi}} \int_V^\infty e^{-\frac{x^2}{2}}dx + o(1).
\]
To unsmooth this distribution result, fix $\varepsilon>0$. Then take $\Phi=\Phi_\varepsilon$ to be supported on $[1-\varepsilon,2+\varepsilon]$. Note that we have
\[
\sum_{\substack{\mathfrak q \in \mathcal F_n \\ X \leq N(\mathfrak q) \leq 2X}} 1 = \Res_{s=1} D(s) X+o(X),
\]
as opposed to
\[
\sum_{\mathfrak q \in \mathcal F_n} \Phi \left( \frac{N(\mathfrak q)}{X}\right) = \widehat{\Phi}(1) \Res_{s=1} D(s) X+O(X^{1/2+\varepsilon}).
\]
By definition, $\widehat{\Phi}(1) = 1 + O(\varepsilon)$, and so taking $\varepsilon \to 0$ with a dyadic decomposition gives
\begin{align}
\label{pdistribution}
\P_{\leq X} \left( \frac{\Re \mathcal P(\mathfrak q)}{\sqrt{ \frac 12 \log\log X}} \geq V \right) = \frac{1}{\sqrt{2\pi}} \int_V^\infty e^{-\frac{x^2}2}dx+o(1).
\end{align}
Therefore, to prove Theorem \ref{clt}, it suffices to show that the number of $\mathfrak q$ satisfying 
\begin{align}
\label{condition1}
\frac{\log |L(\frac 12 , \chi_\mathfrak q)|}{\sqrt{\frac 12 \log\log X}} \geq V \text{ and } \frac{\Re \mathcal P(\mathfrak q)}{\sqrt{\frac 12 \log\log X}}<V,
\end{align}
is negligible, of size $o(X)$.

Now fix $\varepsilon>0$. We further split into two cases; one where $\Re \mathcal P(\mathfrak q)$ is approximately $\log |L(\frac 12, \chi_\mathfrak q)|$:
\begin{align}
\label{condition2}
\frac{\Re \mathcal P(\mathfrak q)}{\sqrt{ \frac 12 \log\log X}} \geq V-\varepsilon,
\end{align}
and one where $\Re \mathcal P(\mathfrak q)$ is substantially far away from $\log |L(\frac 12, \chi_\mathfrak q)|$:
\begin{align}
\label{condition3}
\frac{\Re \mathcal P(\mathfrak q)}{\sqrt{\frac 12 \log\log X}}< V-\varepsilon.
\end{align}
By (\ref{pdistribution}) the number of $\mathfrak q$ satisfying both (\ref{condition1}) and (\ref{condition2}) is bounded by $O(\varepsilon X)+o_\varepsilon(X)$.

Any $\mathfrak q$ satisfying (\ref{condition1}) and (\ref{condition3}) must have
\[
\log |L(\tfrac 12, \chi_\mathfrak q)| - \Re \mathcal P(\mathfrak q) > \varepsilon \sqrt{\tfrac 12\log\log X}.
\]
We bound this using Markov's inequality for the smoothed version via a dyadic decomposition, so the number of such $\mathfrak q$ is 
\begin{align}
\label{markov}
\ll X\E_X^\Phi \left[ |L(\tfrac 12, \chi_\mathfrak q)|^2 e^{-2\Re \mathcal P(\mathfrak q)} \right] e^{-2\varepsilon \sqrt{\frac 12 \log\log X}}.
\end{align}
Therefore, it suffices to obtain a good bound for the above expectation.

It will be technically convenient to work with a truncated exponential $E_\ell(x) = \sum_{j=0}^\ell \frac{x^j}{j!}$ instead. Given that our primes have norm bounded by $X^{1/(\log\log X)^2}$, we truncate up to $\ell \asymp \log\log X$ so that the polynomial has length $X^{o(1)}$. Also, for our truncated exponential to give a good approximation, we must work in the range $|\Re \mathcal P(\mathfrak q)| \ll \ell$. Therefore, we deal with the $\mathfrak q$ satisfying $\Re \mathcal P(\mathfrak q) \leq - \log\log X$ separately. Indeed, Proposition \ref{pmoments} shows that the number of such $\mathfrak q$ is $o(X)$.

For the final remaining $\mathfrak q$, we use a precise upper bound for $e^x$ (\cite{maks-sound}, Lemma 1), which yields
\[
|L(\tfrac{1}{2},\chi_\mathfrak q)|^2 e^{-2\Re \mathcal P(\mathfrak q)} \ll |L(\tfrac 12,\chi_\mathfrak q)|^2 E_\ell(-2\Re \mathcal P(\mathfrak q)),
\]
where $\ell = 20 \lceil \log\log X \rceil$. The constant 20 is required for our application of Rankin's trick in Section 5. 

We shall prove the following proposition:
\begin{prop}
\label{kequals-1}
Let $\ell$ be as above. Then,
\[
\E_X^\Phi\left[ |L(\tfrac{1}{2},\chi_\mathfrak q)|^2 E_\ell(-2\Re \mathcal P(\mathfrak q)) \right] \ll (\log\log X)^2.
\]
\end{prop}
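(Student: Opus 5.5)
Expanding $E_\ell(-2\Re\mathcal P(\mathfrak q))$ as a Dirichlet polynomial in the characters $\chi_\mathfrak q$ and $\overline{\chi_\mathfrak q}$ reduces the problem to a twisted second moment. Write $2\Re\mathcal P(\mathfrak q) = \mathcal P(\mathfrak q)+\overline{\mathcal P(\mathfrak q)}$. Then
\[
E_\ell(-2\Re \mathcal P(\mathfrak q)) = \sum_{j\le \ell}\frac{(-1)^j}{j!}\sum_{a+b=j}\binom{j}{a}\mathcal P(\mathfrak q)^a\overline{\mathcal P(\mathfrak q)}^b = \sum_{\mathfrak a,\mathfrak b} \frac{c(\mathfrak a,\mathfrak b)}{N(\mathfrak a\mathfrak b)^{1/2}}\chi_\mathfrak q(\mathfrak a)\overline{\chi_\mathfrak q(\mathfrak b)} .
\]
Here $\mathfrak a,\mathfrak b$ range over products of at most $\ell$ primes from $P$. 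The coefficients are $c(\mathfrak a,\mathfrak b)=\frac{(-1)^{\Omega(\mathfrak a)+\Omega(\mathfrak b)}}{\Omega(\mathfrak a)!\,\Omega(\mathfrak b)!}\,g(\mathfrak a)g(\mathfrak b)$, where $g$ is the multinomial-type multiplicative weight counting orderings of prime factors. Since $N(\mathfrak a\mathfrak b)\le X^{2\ell/(\log\log X)^2}=X^{o(1)}$, this is a short Dirichlet polynomial. I would therefore invoke the twisted second moment asymptotic of Diaconu--Ion--Paşol--Popa for $\E_X^\Phi[|L(\tfrac12,\chi_\mathfrak q)|^2\chi_\mathfrak q(\mathfrak a)\overline{\chi_\mathfrak q(\mathfrak b)}]$. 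It gives a main term $\mathcal M(\mathfrak a,\mathfrak b)$ plus a power-saving error $O(X^{-\delta}N(\mathfrak a\mathfrak b)^{A})$. Summed over all $\mathfrak a,\mathfrak b$ with the trivially bounded coefficients, the error contributes $X^{-\delta+o(1)}$, which is negligible.

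The main term is the content of the proof. In the unitary family the diagonal term should arise when $\mathfrak a\overline{\mathfrak b}$ is, in the appropriate sense, an $n$-th power. Since $\mathfrak a,\mathfrak b$ have few prime factors, and one can reduce to the case $\mathfrak a,\mathfrak b$ coprime, this roughly forces $\mathfrak a \mathfrak b^{-1}$ to be an $n$-th power. I expect $\mathcal M(\mathfrak a,\mathfrak b)$ to have the shape $\frac{h(\mathfrak a,\mathfrak b)}{N(\mathfrak a\mathfrak b)^{1/2}}$ times a polynomial in $\log X$ and $\log N(\cdot)$. I would take the precise form from their theorem, shifting to $\tfrac12+\alpha,\tfrac12+\beta$ if needed to make the main term a clean Euler product. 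The strategy is then to compare with the model computation. Heuristically $\E[|L|^2 \cdot]$ with $|L|^2\approx \exp(2\Re\sum_{\mathfrak p}\chi(\mathfrak p)N(\mathfrak p)^{-1/2})$ behaves like $\log X\cdot\exp(-2\sum_{\mathfrak p\in P}N(\mathfrak p)^{-1})$ times a bounded factor, and this is $\asymp \log X\cdot(\log\log X)^2/\log X$ because $\sum_{\mathfrak p\in P}1/N(\mathfrak p)=\log\log X-2\log\log\log X+O(1)$. This explains the bound $(\log\log X)^2$.

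To make this rigorous I would first drop the truncation $j\le\ell$. The terms with $\Omega(\mathfrak a)+\Omega(\mathfrak b)>\ell$ are controlled by Rankin's trick: insert the factor $e^{(\Omega(\mathfrak a)+\Omega(\mathfrak b)-\ell)}$, or more precisely $r^{\Omega-\ell}$ with $r$ chosen large, and bound the sum by
\[
e^{-\ell}\exp\Big(O\Big(\sum_{\mathfrak p\in P}\frac{1}{N(\mathfrak p)}\Big)\Big)\log X = (\log X)^{1-20+O(1)},
\]
using $\ell=20\lceil\log\log X\rceil$; this is where the constant $20$ enters. The full sum then factors as an Euler product over $\mathfrak p\in P$, since $\mathcal M$ is multiplicative in $(\mathfrak a,\mathfrak b)$. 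Each prime contributes $1-\frac{2}{N(\mathfrak p)}+O(N(\mathfrak p)^{-2})$ against the $\log X$ coming from the pole structure. The contribution $-2/N(\mathfrak p)$ comes from the cross terms $(\mathfrak a,\mathfrak b)=(\mathfrak p,1),(1,\mathfrak p)$, which pair with the $\chi(\mathfrak p)$ part of $|L|^2$. The product gives $\log X\prod_{\mathfrak p\in P}(1-2/N(\mathfrak p))\ll(\log\log X)^2$.

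I expect the main obstacle to be extracting a tractable, multiplicative form of the main term $\mathcal M(\mathfrak a,\mathfrak b)$ from the Diaconu--Ion--Paşol--Popa asymptotic. Their twist must be written as a single ideal through reciprocity (Lemma \ref{reciprocitylem}), and this introduces Hilbert symbol factors $(m_G,m_E)_S$ that depend on classes in $R_\mathfrak c$. I would handle these by splitting $\mathfrak a,\mathfrak b$ according to their classes in $R_\mathfrak c$, a finite set, and bounding each class separately. Within a class, I would bound the $\log$-polynomial main term by its leading $\log X$ behaviour, using $\log N(\mathfrak a)\ll \ell\log X/(\log\log X)^2$.
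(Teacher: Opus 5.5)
Your skeleton matches the paper's. You expand $E_\ell$ and insert the Diaconu--Ion--Pa\c{s}ol--Popa twisted second moment (Lemma \ref{expected_twistedlem}). You discard the power-saving error, complete the truncated sum by Rankin's trick, and evaluate an Euler product with Mertens.

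\textbf{The gap: the main term is not multiplicative.} It has the form $D\,A_1(\mathfrak m,\mathfrak n)\bigl(\log X+c+A_2(\mathfrak m,\mathfrak n)\bigr)$, where $A_1$ is multiplicative but $A_2$ is \emph{additive}. So the sum does not simply factor as you assert. Your remedy is to bound the $\log N(\cdot)$ part pointwise by $\ell\log X/(\log\log X)^2$ and to bound each class in $R_\mathfrak c$ separately. That is only legitimate inside an absolute-value sum, and it destroys the cancellation the proposition rests on. With the signs $(-1)^{\Omega(\mathfrak m\mathfrak n)}$, the completed sum of $A_1$-weights is $\exp(-\sum_{\mathfrak p\in P}N(\mathfrak p)^{-1}+O(1))\asymp(\log\log X)^2/\log X$. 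With absolute values it is $\exp(3\sum_{\mathfrak p\in P}N(\mathfrak p)^{-1}+O(1))$, a loss of roughly $(\log X)^4$.

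The missing idea is the paper's: $A_1A_2=\frac{d}{ds}A_1(\mathfrak m,\mathfrak n,s)$. The Euler-product bound \eqref{Abound} holds uniformly on the disc $|s-\tfrac12|\ll 1/\log L_2$. Cauchy's estimate therefore bounds the \emph{signed} $A_1A_2$-sum by $\log L_2\cdot\exp(-\sum_{\mathfrak p\in P}N(\mathfrak p)^{-1}+O(1))\ll 1$. Equivalently, you can differentiate the Euler product exactly, one prime at a time.

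No splitting into classes is needed. The Hilbert symbols from reciprocity are already absorbed in the cited theorem, whose main term involves only the constant $D$ and $A_1,A_2$. Splitting would actually hurt: detecting a class with characters $\psi$ of $R_\mathfrak c$ produces Euler factors $1+(1-\psi(\mathfrak p)-\overline{\psi(\mathfrak p)})/N(\mathfrak p)$. For $\psi\neq 1$ their products are of size $\exp(\sum_{\mathfrak p\in P}N(\mathfrak p)^{-1})$ rather than its reciprocal.

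\textbf{A slip in the local computation.} The Euler factor is $1-N(\mathfrak p)^{-1}+O(N(\mathfrak p)^{-3/2})$, not $1-2N(\mathfrak p)^{-1}$. Besides the cross terms $(\mathfrak p,1),(1,\mathfrak p)$, which contribute $-2N(\mathfrak p)^{-1}$, there is the diagonal pair $(\mathfrak p,\mathfrak p)$ from the quadratic term of $E_\ell$. It contributes $+N(\mathfrak p)^{-1}$; this is the factor $1+(k^2+2k)/N(\mathfrak p)$ at $k=-1$ in the paper. Your heuristic carries the same factor of $2$. The quantity $\log X\exp(-2\sum_{\mathfrak p\in P}N(\mathfrak p)^{-1})$ equals $(\log\log X)^4/\log X$. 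It is $\log X\exp(-\sum_{\mathfrak p\in P}N(\mathfrak p)^{-1})$ that equals $(\log\log X)^2$, which is exactly the claimed bound.

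\textbf{Rankin's trick.} The parameter $r$ must not be taken large. The completed sum grows like $(\log X)^{r^2+2r}$, while the saving is only $r^{-\ell}=(\log X)^{-20\log r}$. You therefore need $20\log r>r^2+2r+1$. The choice $r=e$ works, which is why the constant $20$ appears.
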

Together with \eqref{markov} this shows that the number of our final remaining $\mathfrak q$ satisfying (\ref{condition1}) is $o(X)$. After letting $X \to \infty$ we take $\varepsilon \to 0$ to obtain Theorem \ref{clt}.
\end{proof}
\subsection{Theorem \ref{moments}}
\begin{proof}
In \cite{maks-sound}, Radziwiłł and Soundararajan make use of Young's inequality to bound the $k$-th power of the central value in terms of a short Dirichlet sum and the first power of the central value. We recall their key lemma.
\begin{lem}[\cite{maks-sound}, Lemma 2]
\label{youngbound}
Let $y \geq 0$ be a real number. Suppose that $x_1,\dots,x_R$ are real numbers, and $\ell_1,\dots, \ell_R$ are positive even integers. Then for any $0 < k < 1$ we have
\begin{multline*}
y^k \leq Cky \prod_{j=1}^R E_{\ell_j}((k-1)x_j)+C(1-k) \prod_{j=1}^R E_{\ell_j}(kx_j) 
\\
+ \sum_{r=0}^{R-1} \left( Cky \prod_{j=1}^r E_{\ell_j}((k-1)x_j)+C(1-k) \prod_{j=1}^r E_{\ell_j} (kx_j)\right) \left( \frac{e^2 x_{r+1}}{\ell_{r+1}}\right)^{\ell_{r+1}},
\end{multline*}
where $C = \exp((e^{-\ell_1}+\dots+e^{-\ell_R})/16)$.
\end{lem}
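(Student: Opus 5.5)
The statement just quoted is Lemma 2 of \cite{maks-sound}, and I would prove it as an elementary real-variable inequality. The starting point is Young's inequality in the form $y^k \leq k y a^{k-1} + (1-k) a^k$, valid for all $a>0$ and $0<k<1$. It follows from concavity of $\log$, or equivalently from weighted AM--GM applied to $y a^{k-1}$ and $a^k$ with weights $k$ and $1-k$. I would apply it with $a = \exp(-\sum_{j\le r} x_j)$ for a suitable $r$, which gives
\[
y^k \le k y \prod_{j\le r} e^{(k-1)x_j} + (1-k)\prod_{j\le r} e^{k x_j}.
\]
The task is then to replace each exponential by its truncation $E_{\ell_j}$, at the cost of the factor $C$ and of the error terms $(e^2x_{r+1}/\ell_{r+1})^{\ell_{r+1}}$.

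\emph{The two elementary facts about $E_\ell$.} For even $\ell$, $E_\ell(t)>0$ for all real $t$. Moreover:
\begin{itemize}
\item If $|t| \le \ell/e^2$, then $e^t \le (1+e^{-\ell}/16)E_\ell(t)$. This comes from bounding the tail $\sum_{j>\ell}|t|^j/j!$ by $(e|t|/\ell)^{\ell}\cdot(\text{small})$ via $j!\ge (j/e)^j$, and comparing with $E_\ell(t)\ge e^{t}/2$ in this range.
\item If $|t|>\ell/e^2$, then $(e^2 t/\ell)^{\ell} \ge 1$. Since $\ell$ is even this quantity is nonnegative regardless of the sign of $t$.
\end{itemize}

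\emph{Case split.} Let $r$ be the first index with $|x_{r+1}| > \ell_{r+1}/e^2$, and set $r=R$ if there is none. Apply the Young bound above with this $r$, and replace each $e^{(k-1)x_j}$ and $e^{kx_j}$ for $j\le r$ by $(1+e^{-\ell_j}/16)E_{\ell_j}(\cdot)$. This is legitimate because $|(k-1)x_j|$ and $|kx_j|$ are both at most $|x_j| \le \ell_j/e^2$. The product of the factors $(1+e^{-\ell_j}/16)$ is at most $C$.
\begin{itemize}
\item If $r=R$, this is exactly the first line of the claimed inequality.
\item If $r<R$, multiply the resulting bound by $(e^2x_{r+1}/\ell_{r+1})^{\ell_{r+1}}\ge 1$. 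This produces the $r$-th summand of the sum.
\end{itemize}
Every other term on the right-hand side is nonnegative, since $y\ge0$, each $E_{\ell_j}$ is positive for even $\ell_j$, and each error factor is an even power. So the single term we have isolated bounds $y^k$, and adding the remaining nonnegative terms preserves the inequality.

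\emph{Main obstacle.} The only delicate step is the truncation estimate $e^t\le(1+e^{-\ell}/16)E_\ell(t)$ for $|t|\le \ell/e^2$, in particular for negative $t$, where $E_\ell(t)$ could a priori be small. I would handle it as follows. The tail satisfies
\[
\Bigl|\sum_{j>\ell}\frac{t^j}{j!}\Bigr| \le \sum_{j>\ell}\Bigl(\frac{e|t|}{j}\Bigr)^{j} \le 2e^{-\ell-1}.
\]
Since $e^t\ge e^{-\ell/e^2}$, this tail is at most $e^{-\ell}e^{t}/16$ times a harmless constant, once one checks the numerics $2e^{-1}e^{\ell/e^2}e^{-\ell}\le e^{-\ell}/16$-type bounds with some care. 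Concretely, I would first handle the tail more precisely by noting $|t|/j\le e^{-2}$, which gives the tail $\le (e^{-1})^{\ell+1}\cdot$ geometric factor, and then verify the constant $1/16$. Everything else is bookkeeping.
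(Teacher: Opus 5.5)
Your architecture is the right one, and it is the argument of Radziwiłł and Soundararajan that the paper cites. You apply Young's inequality $y^k\le kya^{k-1}+(1-k)a^k$ to the first $r$ variables, where $r$ is the first index with $|x_{r+1}|>\ell_{r+1}/e^2$. You then multiply by the even power $(e^2x_{r+1}/\ell_{r+1})^{\ell_{r+1}}\ge 1$ and discard the other nonnegative terms. (Minor slip: your displayed inequality needs $a=\exp(\sum_{j\le r}x_j)$, not $\exp(-\sum_{j\le r}x_j)$.)

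The gap is in the step you flagged yourself: the estimate $e^t\le(1+e^{-\ell}/16)E_\ell(t)$ for $t<0$. The method you propose there cannot work. Any argument that bounds $|e^t-E_\ell(t)|$ and then compares with $e^t\ge e^{-\ell/e^2}$ needs $|e^t-E_\ell(t)|\le e^{-\ell}E_\ell(t)/16\approx e^{-\ell(1+e^{-2})}/16$. But at $t=-\ell/e^2$ the remainder is dominated by its first term $|t|^{\ell+1}/(\ell+1)!$, which by Stirling is roughly $e^{-\ell-2}/\sqrt{2\pi\ell}$. This exceeds $e^{-\ell(1+e^{-2})}/16$ by a factor growing like $e^{\ell/e^2}/\sqrt{\ell}$; numerically the two-sided bound already fails at $\ell=20$. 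So the two-sided inequality is simply false in part of the range you need, and no care with numerics rescues it. Indeed, the target $2e^{-1}e^{\ell/e^2}\le 1/16$ you wrote is false for every $\ell$.

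The missing idea is to use the sign of the remainder rather than its size. By Taylor's theorem, $e^t-E_\ell(t)=e^{\xi}t^{\ell+1}/(\ell+1)!$ for some $\xi$ between $t$ and $0$. Since $\ell$ is even, $\ell+1$ is odd, so this is $\le 0$ whenever $t\le 0$. Hence $e^t\le E_\ell(t)$ for every $t\le 0$, with no constant and no restriction on $|t|$.

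The same identity gives $E_\ell(t)\ge e^t>0$ for $t<0$. This is the positivity you asserted without proof. You need it both to multiply the factor-wise inequalities and to discard terms at the end.

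The factor $1+e^{-\ell}/16$ is then only needed for $0\le t\le\ell/e^2$, where $E_\ell(t)\ge 1$ and your tail approach does work. Note, though, that $j!\ge(j/e)^j$ alone only gives a tail of about $e^{-\ell}/(e^2-1)$, which is not small enough. Using $j!\ge\sqrt{2\pi j}\,(j/e)^j$ together with $(\ell/j)^j\le e^{-(j-\ell)}$ gives $\sum_{j>\ell}t^j/j!\le e^{-\ell}/\big((e^2-1)\sqrt{2\pi(\ell+1)}\big)\le e^{-\ell}/16$ for $\ell\ge 2$, which closes the argument.
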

Following Radziwiłł and Soundararajan's notation, for some fixed sets of primes $P_1,\dots,P_R$, and even integers $\ell_1,\dots,\ell_R$, we define
\[
\mathcal P_j(\mathfrak q):= \sum_{\mathfrak p \in P_j} \frac{\chi_\mathfrak q(\mathfrak p)}{N(\mathfrak p)^{1/2}}
\]
and
\[
\mathcal A_j(\mathfrak q):= E_{\ell_j}(2(k-1) \Re \mathcal P_j(\mathfrak q)), \ \mathcal B_j(\mathfrak q) := E_{\ell_j}(2k\Re \mathcal P_j(\mathfrak q)).
\]
Then after applying Lemma \ref{youngbound}, we obtain
\begin{multline}
\label{keylemma}
|L(\tfrac{1}{2},\chi_\mathfrak q)|^{2k} 
\\
\leq Ck|L(\tfrac{1}{2},\chi_\mathfrak q)|^2 \left( \prod_{j=1}^R \mathcal A_j(\mathfrak q) + \sum_{r=0}^{R-1} \prod_{j=1}^r \mathcal A_j(\mathfrak q) \left( \frac{ 2e^2 \Re \mathcal P_{r+1}(\mathfrak q)}{\ell_{r+1}}\right)^{\ell_{r+1}} \right)
\\
+ C(1-k)\left( \prod_{j=1}^R \mathcal B_j(\mathfrak q) + \sum_{r=0}^{R-1} \prod_{j=1}^r \mathcal B_j(\mathfrak q) \left( \frac{ 2e^2 \Re \mathcal P_{r+1}(\mathfrak q)}{\ell_{r+1}}\right)^{\ell_{r+1}}\right).
\end{multline}
This essentially mirrors the setup of Proposition \ref{kequals-1}, but we have now split the primes into $R$ disjoint sets, which gives us better control of the size of the Dirichlet sums. To keep the Dirichlet sums short enough, given $X>0$, we define
\[
P_1 := \{\mathfrak p \not\in S \ : \ N(\mathfrak p) \leq X^{1/\ell_1^2}\}, \ P_j := \{ \mathfrak p \not\in S \; : \; X^{1/\ell_{j-1}^2}<N(\mathfrak p) \leq X^{1/\ell_j^2} \}
\]
and
\[
\ell_1:=5400 \lceil \log\log X \rceil, \ \ell_j = 5400 \lceil \log \ell_{j-1} \rceil,
\]
where $R$ is the largest integer such that $\ell_R>10^{6n}$, so that effectively we are summing over all primes up to $X^{1/\ell_R^2}<X^{1/10^{12n}}$ (we note this is much larger than it needs to be for the proof to work).

We note that $\sum_{j=1}^R \frac{1}{\ell_j} < \frac{1}{10^{5n}}$, which will ensure that the Dirichlet sums in the proofs of Propositions \ref{bound1} and \ref{bound2} are sufficiently short. Let $S_j = \sum_{\mathfrak p \in P_j} N(\mathfrak p)^{-1}$. Then an application of Mertens' theorem yields
\[
S_j = 2 \log \ell_{j-1} - 2\log \ell_j +o(1) \leq \ell_j/2700 \ \text{ for } \ j \geq 2,
\]
and $S_1 \leq \log\log X+O(1)$.

We split the problem of bounding the average of (\ref{keylemma}) into two propositions.
\begin{prop}
\label{bound1}
\[
\E_X^\Phi \left[ \left( \prod_{j=1}^R \mathcal B_j(\mathfrak q) + \sum_{r=0}^{R-1} \prod_{j=1}^r \mathcal B_j(\mathfrak q) \left( \frac{ 2e^2 \Re \mathcal P_{r+1}(\mathfrak q)}{\ell_{r+1}}\right)^{\ell_{r+1}}\right)\right] \ll (\log X)^{k^2}.
\]
\end{prop}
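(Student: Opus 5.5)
The plan is to expand everything into short Dirichlet polynomials over prime ideals in $P_1\cup\dots\cup P_R$ and to average them over $\mathfrak q$ using orthogonality of the characters $\chi_\mathfrak q$ in the family. The key input is an orthogonality estimate. For an ideal $\mathfrak m\in I(S)$, the average $\E_X^\Phi[\chi_\mathfrak q(\mathfrak m)]$ should equal a main term $\prod_{\mathfrak p\mid\mathfrak m}(1+N(\mathfrak p)^{-1})^{-1}$ (of size $\gg 1$, uniformly bounded) when $\mathfrak m$ is an $n$-th power. When $\mathfrak m$ is not an $n$-th power, it should be $O(N(\mathfrak m)^{c}X^{-1/2+\varepsilon})$ for some fixed $c$. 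I would prove this by writing $\chi_\mathfrak q(\mathfrak m)$ via Lemma \ref{reciprocitylem} as $\chi_\mathfrak m(\mathfrak q)$ times a Hilbert-symbol factor depending only on the classes of $\mathfrak m,\mathfrak q$ in $R_\mathfrak c$. I would then split $\mathfrak q$ by class and sum a nontrivial Hecke character of conductor $\ll N(\mathfrak m)$ over squarefree $\mathfrak q$ via Mellin inversion, as in Lemma \ref{familysizelem}, using $L(s,\chi_\mathfrak m\psi)/L(2s,\chi_\mathfrak m^2\psi^2)$ and the convexity bound. The polynomials have total length at most $X^{\sum_j \ell_j/\ell_j^2\cdot O(1)}\le X^{O(10^{-5n})}$, which is why the lengths were chosen so short. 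So all off-diagonal terms contribute $O(X^{-1/3})$ in total, and only $n$-th power terms survive.

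For the first term, $\prod_j\mathcal B_j$, the $\mathcal B_j$ involve disjoint sets of primes. The diagonal main term therefore factors, up to the harmless multiplicative local factors, as $\prod_j \E[\mathcal B_j]$ in the model where $\chi_\mathfrak q(\mathfrak p)$ are independent random variables uniform on $n$-th roots of unity. For such a model, $\E[E_{\ell_j}(2k\Re\mathcal P_j)]\le \E[\exp(2k\Re\mathcal P_j)]+(\text{tail})$, and the tail is controlled as in \cite{maks-sound}. To do this, write $E_\ell(x)=e^x-\sum_{m>\ell}x^m/m!$ and bound the moments of $\Re\mathcal P_j$ by $(m!/(2^{m/2}(m/2)!))(S_j)^{m/2}$-type quantities, using that the diagonal only pairs $\chi$ with $\bar\chi$ when $n\ge3$. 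The main part is $\prod_{\mathfrak p\in P_j}\E[e^{2k\Re X_\mathfrak p/\sqrt{N\mathfrak p}}]=\exp(k^2 S_j+O(\sum N(\mathfrak p)^{-3/2}))$, since $\E[(\Re X)^2]=1/2$ and odd moments of $\Re X$ up to order $n-1$ vanish. Here I must check that third moments vanish for $n\ge3$: $\E[X^3]=0$ needs $n\nmid 3$, and for $n=3$ one gets a term of size $N(\mathfrak p)^{-3/2}$, which is summable anyway. Multiplying over $j$ gives $\exp(k^2\sum_j S_j)\ll(\log X)^{k^2}$.

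For the terms with $r<R$, I would use Cauchy–Schwarz-free positivity. The quantity $\prod_{j\le r}\mathcal B_j\cdot(\Re\mathcal P_{r+1})^{\ell_{r+1}}$ is again a product over disjoint prime sets, since $\ell_{r+1}$ is even. After the diagonal reduction, its expectation is $\prod_{j\le r}\E[\mathcal B_j]\cdot\E[(\Re\mathcal P_{r+1})^{\ell_{r+1}}]$. The second factor is at most $\frac{\ell_{r+1}!}{2^{\ell_{r+1}}(\ell_{r+1}/2)!}S_{r+1}^{\ell_{r+1}/2}$ up to a constant factor. Combined with $(2e^2/\ell_{r+1})^{\ell_{r+1}}$ and $S_{r+1}\le\ell_{r+1}/2700$, this gives a bound like $e^{-\ell_{r+1}}$. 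Meanwhile $\prod_{j\le r}\E[\mathcal B_j]\ll\exp(k^2\sum_{j\le r}S_j)\le(\log X)^{k^2}$. Summing over $r$, the factors $e^{-\ell_{r+1}}$ are summable, so this completes the bound.

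The main obstacle is the first step: the orthogonality estimate with uniform error terms in $\mathfrak m$. This requires care with the reciprocity factor and the class decomposition, and with verifying that when $\mathfrak m$ is not an $n$-th power, the twisted character really is nontrivial.
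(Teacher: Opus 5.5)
Your proposal is correct, and its skeleton matches the paper's. You expand into a Dirichlet polynomial of length $X^{O(10^{-5n})}$ and apply the orthogonality estimate. That estimate is already Lemma \ref{expected_character} in the paper, proved exactly as you sketch: reciprocity, a Fourier expansion of the Hilbert-symbol factor over $H_\mathfrak c^*$, and Mellin inversion with $L(s,\psi_\chi)/L(2s,\psi_\chi^2)$. You then keep only the terms with $\mathfrak m\mathfrak n^{n-1}$ an $n$-th power. The differences lie in how the diagonal is bounded.

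For $\prod_j\mathcal B_j$, the paper observes that every diagonal term is nonnegative since $k>0$. So both the local factors $\prod(1+N(\mathfrak p)^{-1})^{-1}\le 1$ and the truncations $\Omega(\mathfrak m_j\mathfrak n_j)\le\ell_j$ can be dropped, leaving the Euler product $\prod_\mathfrak p(1+x_\mathfrak p^2+O(x_\mathfrak p^3))$. Your random-model computation of $\E[e^{2k\Re\mathcal P_j}]$ is the same Euler product. The tail estimate you plan is unnecessary, though: every moment of $2k\Re\mathcal P_j$ in the model is a sum of nonnegative terms, so $\E[E_\ell(Y)]\le\E[e^Y]$ holds outright.

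For the terms with $r<R$, the paper avoids any moment computation by Rankin's trick. The bound $\mathbf 1_{\Omega=\ell_{r+1}}\le e^{2(\Omega-\ell_{r+1})}$ controls the $(r+1)$-st block by $\exp(-2\ell_{r+1}+e^4S_{r+1})$, and Stirling then gives $e^{-\ell_{r+1}/2}$. You instead need the Gaussian-type bound $\E[(\Re\mathcal P_{r+1})^{\ell}]\ll\frac{\ell!}{2^{\ell}(\ell/2)!}S_{r+1}^{\ell/2}$ in the $n$-th order model, which you assert without proof. It is true, and any polynomial-in-$\ell$ loss is harmless:
\begin{itemize}
\item in the set-partition expansion of $\E[(\mathcal P_{r+1}+\overline{\mathcal P_{r+1}})^{\ell}]$, each block of size two must pair $\chi$ with $\overline{\chi}$ since $n\ge 3$, contributing $2S_{r+1}$;
\item blocks of size $b\ge 3$ contribute only $O(2^b\sum_{\mathfrak p\in P_{r+1}}N(\mathfrak p)^{-3/2})$.
\end{itemize}
This route gives somewhat stronger decay (about $e^{-2\ell_{r+1}}$), but it requires an extra combinatorial lemma where the paper's one-line Rankin argument suffices.
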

\begin{prop}
\label{bound2}
\[
\E_X^\Phi \left[|L(\tfrac{1}{2},\chi_\mathfrak q)|^2 \left( \prod_{j=1}^R \mathcal A_j(\mathfrak q) + \sum_{r=0}^{R-1} \prod_{j=1}^r \mathcal A_j(\mathfrak q) \left( \frac{2e^2 \Re \mathcal P_{r+1}(\mathfrak q)}{\ell_{r+1}}\right)^{\ell_{r+1}}\right)\right] \ll (\log X)^{k^2}.
\]
\end{prop}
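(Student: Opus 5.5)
The statement to prove is Proposition \ref{bound2}. The plan is to expand every factor in the bracket into a short Dirichlet polynomial in $\chi_\mathfrak q$, apply the twisted second moment asymptotic of Diaconu, Ion, Paşol, and Popa termwise, and then recognize the resulting main terms as an Euler-product computation that mimics a random model. Since each $\Re\mathcal P_j(\mathfrak q) = \tfrac12(\mathcal P_j(\mathfrak q)+\overline{\mathcal P_j(\mathfrak q)})$, each of $\mathcal A_j(\mathfrak q)$ and $(2e^2\Re\mathcal P_{r+1}(\mathfrak q)/\ell_{r+1})^{\ell_{r+1}}$ is a linear combination of terms $\chi_\mathfrak q(\mathfrak a)\overline{\chi_\mathfrak q(\mathfrak b)}$. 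Here $\mathfrak a,\mathfrak b$ are products of at most $\ell_j$ primes from $P_j$, so $N(\mathfrak a),N(\mathfrak b)\le X^{1/\ell_j}$. Because $\sum_j 1/\ell_j<10^{-5n}$, the full product over $j$ is a Dirichlet polynomial of length at most $X^{2\cdot 10^{-5n}}$, which falls well inside the power-saving range of the twisted second moment.

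\textbf{Step 1 (twisted second moment).} Write $\overline{\chi_\mathfrak q(\mathfrak b)}=\chi_\mathfrak q(\mathfrak b^{n-1})$, so each term becomes $\E_X^\Phi[|L(\tfrac12,\chi_\mathfrak q)|^2\chi_\mathfrak q(\mathfrak m)]$ with $\mathfrak m=\mathfrak a\mathfrak b^{n-1}$. The DIPP asymptotic should give this as $M(\mathfrak m)+O(N(\mathfrak m)^{c}X^{-\delta})$. If the twist is stated in terms of $\chi_\mathfrak m(\mathfrak q)$ rather than $\chi_\mathfrak q(\mathfrak m)$, I convert using Lemma \ref{reciprocitylem}; the Hilbert symbol factor depends only on the classes in $R_\mathfrak c$, so I split the $\mathfrak q$ into finitely many classes. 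The main term $M(\mathfrak m)$ should be supported on those $\mathfrak m$ whose $n$-th power-free part is trivial, i.e.\ $\mathfrak m$ is an $n$-th power up to $S$. It should be essentially $N(\mathfrak m)^{-1/2}$ times a multiplicative function $h(\mathfrak m)$ times $\log X$, with secondary $\log$-derivative corrections. The error terms sum to at most $X^{-\delta+O(10^{-5n})}$ times the $\ell^1$-norm of the coefficients, and this is negligible.

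\textbf{Step 2 (main terms as a random model).} Since $n\ge 3$, the $n$-th power condition on $\mathfrak a\mathfrak b^{n-1}$ forces, to leading order, $\mathfrak a=\mathfrak b$ on squarefree parts; the other configurations involve higher prime powers or $n$-fold coincidences, and I would show they contribute lower-order factors $1+O(N(\mathfrak p)^{-3/2})$. The prime factor $h(\mathfrak p)\approx 1+N(\mathfrak p)^{-1/2}\cdot(\text{shift})$ coming from $|L|^2$ behaves like multiplying by $e^{2\Re\mathcal P}$ in the random model. With $x_j=\Re\mathcal P_j$, the first product then heuristically becomes
\[
\log X\prod_j \E[e^{2x_j}E_{\ell_j}(2(k-1)x_j)]\approx \log X\prod_j\exp\bigl(((1+(k-1))^2-1)S_j\bigr),
\]
and this gives $\log X\cdot (\log X)^{k^2-1}=(\log X)^{k^2}$. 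To make this rigorous, I would follow \cite{maks-sound}, Proposition 3 analog. First replace $E_{\ell_j}$ by the full exponential, incurring errors controlled by Rankin's trick, since truncation terms have $\ge\ell_j$ primes and $S_j\le \ell_j/2700$. Then evaluate the resulting Euler product prime by prime.

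\textbf{Step 3 (tail terms).} For the terms with the factor $(2e^2\Re\mathcal P_{r+1}/\ell_{r+1})^{\ell_{r+1}}$, the same computation gives $(\log X)^{k^2}$ times roughly $\exp(O(S_{r+1}))\,(C S_{r+1}/\ell_{r+1})^{\ell_{r+1}/2}$, using the Gaussian moment bound $\E x^{2m}\ll (2m)!/(2^m m!)\,S^m$. By the choice $S_{r+1}\le \ell_{r+1}/2700$ (and $S_1\le\log\log X$ with $\ell_1=5400\lceil\log\log X\rceil$), this is at most $e^{-\ell_{r+1}}$. Summing over $r$ converges because the $\ell_j$ decrease super-exponentially toward $10^{6n}$.

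The main obstacle I expect is Step 1–2: extracting the precise shape of the DIPP main term, including its dependence on $\mathfrak m$ beyond $n$-th powers and the secondary $\log$-derivative terms. I also need to verify that the off-diagonal contributions, which are nonzero because $\mathfrak m$ need only be an $n$-th power, sum to an acceptable multiplicative factor. I would first try to bound $|M(\mathfrak m)|\ll \log X\, N(\mathfrak m_0)^{-1/2}\tau(\mathfrak m)^{O(1)}$ on $n$-th powers and use positivity and majorization to reduce to the Euler product.
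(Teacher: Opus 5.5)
There is a genuine gap in Steps 1--2, and it sits exactly where you flagged the obstacle. The main term of the twisted second moment is \emph{not} supported on $n$-th powers. In Lemma \ref{expected_twistedlem} it is $D\,A_1(\mathfrak m,\mathfrak n)(\log X+\cdots)$ with $A_1=g(\mathfrak b)h(\mathfrak a)$, where:
\begin{itemize}
\item $\mathfrak a$ is the $n$-th-power-free part of $\mathfrak m\mathfrak n^{n-1}$;
\item $h(\mathfrak p^j)\approx N(\mathfrak p)^{-j/2}$ for $j<n/2$;
\item $h(\mathfrak p^{n-j})=h(\mathfrak p^j)$.
\end{itemize}
The diagonal structure you describe ($n$-th power condition, hence $\mathfrak m=\mathfrak n$ to leading order) is what Lemma \ref{expected_character} gives for Proposition \ref{bound1}, where no $|L|^2$ is present. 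If you run Steps 1--2 as written, the local factor at each prime is $1+(k-1)^2/N(\mathfrak p)+\cdots$. The total is then $\log X\cdot(\log X)^{(k-1)^2}$, which exceeds the target by $(\log X)^{2-2k}$.

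The tilt by $e^{2\Re\mathcal P}$ in your heuristic comes precisely from the non-$n$-th-power twists $\mathfrak m\mathfrak n^{n-1}=\mathfrak p$ and $\mathfrak p^{n-1}$. Each carries coefficient $(k-1)N(\mathfrak p)^{-1/2}$ against a main term of size $N(\mathfrak p)^{-1/2}\log X$. Together they add $2(k-1)/N(\mathfrak p)$ to the local factor, turning $(k-1)^2$ into $(k-1)^2+2(k-1)=k^2-1$. A diagonal correction ``$h(\mathfrak p)\approx 1+N(\mathfrak p)^{-1/2}(\text{shift})$'' cannot produce this. On the diagonal, $\mathfrak m\mathfrak n^{n-1}=\mathfrak p^n$ and $g(\mathfrak p)=1+O(N(\mathfrak p)^{-1})$.

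For the same reason, your fallback of bounding $|M(\mathfrak m)|$ and using positivity and majorization fails. Since $k-1<0$, taking absolute values turns $2(k-1)$ into $2(1-k)$, and you get $(\log X)^{(2-k)^2}$. The sign cancellation must be kept. The paper does this in two steps:
\begin{itemize}
\item It applies Rankin's trick with absolute values only to the truncation tails $\Omega(\mathfrak m_j\mathfrak n_j)>\ell_j$. This is affordable because of the saving $e^{-\ell_j}$ together with $S_j\le \ell_j/2700$.
\item It then evaluates the untruncated \emph{signed} Euler product exactly, using multiplicativity of $A_1$. The local factor is $1+((k-1)^2+2(k-1))/N(\mathfrak p)+O(\cdots)$.
\end{itemize}

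You also need a concrete treatment of the secondary terms. Because the coefficients $(k-1)^{\Omega}$ alternate, the $A_1A_2$ sum has no definite sign despite $A_2\le 0$. It is also not of lower order than the main term, so it cannot be discarded. The paper writes $A_1A_2=\partial_s A_1$ and applies Cauchy's estimate to the Euler-product bound on the disc $|s-\tfrac12|\ll \ell_R^2/\log X$, which costs only a factor $\log X/\ell_R^2$.

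Your Step 3 is fine in outline once the main term is corrected. The paper obtains $e^{-1.9\ell_{r+1}}$ there via Rankin's trick with weight $e^{2\Omega}$.
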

Using the above two propositions and summing dyadically we immediately obtain Theorem \ref{moments}.
\end{proof}
\section{Proof of Proposition \ref{pmoments}}
\begin{proof}
Inserting the definition of $\mathcal P(\mathfrak q)$ gives
\[
\E_X^\Phi\left[ (\Re \mathcal P(\mathfrak q))^k \right]=\frac{1}{2^k}\E_X^\Phi\left[ \left(\sum_{\mathfrak p \in P} \frac{\chi_\mathfrak q(\mathfrak p)}{N(\mathfrak p)^{1/2}} + \sum_{\mathfrak p \in P} \frac{\overline{\chi_\mathfrak q(\mathfrak p)}}{N(\mathfrak p)^{1/2}} \right)^k \right]
.\]
Since $\chi_\mathfrak q$ is an $n$-th order character, $\overline{\chi_\mathfrak q(\mathfrak p)}=\chi_\mathfrak q(\mathfrak p^{n-1})$. After applying the binomial theorem we find that
\[
\E_X^\Phi\left[(\Re \mathcal P(\mathfrak q))^k\right] = \frac{1}{2^k} \E_X^\Phi\left[\sum_{i=0}^k {k \choose i} \left( \sum_{\mathfrak p \in P} \frac{\chi_\mathfrak q(\mathfrak p)}{N(\mathfrak p)^{1/2}}\right)^{k-i}\left( \sum_{\mathfrak p \in P} \frac{\chi_\mathfrak q(\mathfrak p^{n-1})}{N(\mathfrak p)^{1/2}}\right)^{i}\right].
\]
We can rewrite this as a Dirichlet sum, over ideals $\mathfrak m, \mathfrak n$ such that $\Omega(\mathfrak m) = k-i, \Omega(\mathfrak n)=i$, where $\Omega$ is the number of prime factors counting with multiplicity, and every prime factor is in $P$. We introduce a multiplicative function $w$ by defining $w(\mathfrak p^k)=k!$. Then after handling the combinatorics and interchanging the order of summation we arrive at the following expression:
\begin{align}
\label{expanded1}
\E_X^\Phi\left[(\Re \mathcal P(\mathfrak q))^k \right] = \frac{k!}{2^k} \sum_{\substack{\mathfrak m, \mathfrak n  \\  \Omega(\mathfrak m \mathfrak n)=k}} \frac{1}{N(\mathfrak m \mathfrak n)^{1/2} w(\mathfrak m)w(\mathfrak n)} \E_X^\Phi\left[ \chi_\mathfrak q(\mathfrak m \mathfrak n^{n-1}) \right].
\end{align}
By convention, throughout this paper, we shall omit specifying that $\mathfrak m, \mathfrak n$ are supported on a suitable set of primes $P$, and write as above.

To deal with the expected value $\E_X^\Phi\left[ \chi_\mathfrak q(\mathfrak m \mathfrak n^{n-1})\right]$ we prove the following lemma.
\begin{lem}
\label{expected_character}
Let $\mathfrak n$ be an integral ideal coprime to $S$, and denote by $\rad \mathfrak n$ the product of primes dividing $\mathfrak n$. If $\mathfrak n$ is an $n$-th power, then 
\[
\E_X^\Phi\left[ \chi_\mathfrak q(\mathfrak n) \right] = \prod_{\mathfrak p \mid \mathfrak n} \left( 1 + \frac{1}{N(\mathfrak p)}\right)^{-1} +O(X^{-1/2+\varepsilon}N(\rad \mathfrak n)^{1/4+\varepsilon}).
\]
If $\mathfrak n$ is not an $n$-th power, then
\[
\E_X^\Phi[\chi_\mathfrak q(\mathfrak n)] \ll X^{-1/2+\varepsilon}N(\rad \mathfrak n)^{1/4+\varepsilon}.
\]
\begin{proof}
If $\mathfrak n$ is an $n$-th power, then $\chi_\mathfrak q(\mathfrak n)=1$ for $(\mathfrak n, \mathfrak q)=1$ and we proceed as in Lemma \ref{familysizelem}. Otherwise, we write $\mathfrak n = \mathfrak a \mathfrak b^{n}$ where $\mathfrak a$ is $n$-th power free. Then applying Lemma \ref{reciprocitylem} gives
\[
\sum_{\mathfrak q \in \mathcal F_n} \chi_\mathfrak q(\mathfrak n) \Phi \left( \frac{N(\mathfrak q)}{X}\right) = \sum_{\mathfrak q \in \mathcal F_n} \chi_0^\mathfrak b(\mathfrak q)\chi_\mathfrak a(\mathfrak q) (m_G,m_E)_S \Phi \left( \frac{N(\mathfrak q)}{X}\right),
\]
where $\chi_0^\mathfrak b$ is the principal character of modulus $\mathfrak b$.

Let $h(\mathfrak q) := (m_G,m_E)_S$, where $E$ depends on $\mathfrak q$. The function $h : H_\mathfrak c \to \C$ may not be multiplicative. However, we can write it as a linear combination of characters on $H_\mathfrak c$. By Fourier inversion,
\[
h(\mathfrak q) = \frac{1}{\sqrt{|H_\mathfrak c|}} \sum_{\chi \in H_\mathfrak c^*} \overline{\chi(\mathfrak q)} \widehat{h}(\chi),
\]
where
\[
\widehat{h}(\chi) = \frac{1}{\sqrt{|H_\mathfrak c|}} \sum_{I \in H_\mathfrak c} \chi(I)h(I).
\]
Given $\chi \in H_\mathfrak c^*$ we define the Hecke character $\psi_\chi(\mathfrak q):= \chi_0^S(\mathfrak q) \chi_0^\mathfrak b(\mathfrak q)\chi_\mathfrak a(\mathfrak q) \overline{\chi(\mathfrak q)}$, so that
\[
\sum_{\mathfrak q \in \mathcal F_n} \chi_\mathfrak q(\mathfrak n) \Phi \left( \frac{N(\mathfrak q)}{X}\right) = \frac{1}{\sqrt{|H_\mathfrak c|}}\sum_{\chi \in H_\mathfrak c^*} \widehat{h}(\chi) \sum_{\mathfrak q \in \mathcal F_n} \psi_\chi(\mathfrak q) \Phi \left( \frac{N(\mathfrak q)}{X}\right).
\]

Mellin inversion then yields
\[
\sum_{\mathfrak q \in \mathcal F_n} \chi_\mathfrak q(\mathfrak n) \Phi\left( \frac{N(\mathfrak q)}{X}\right) \ll \sum_{\chi \in H_\mathfrak c^*} \left|\frac{1}{2\pi i} \int_{(2)} \widehat{\Phi}(s)D_\chi(s) X^sds\right|,
\]
where
\[
D_\chi(s) :=  \prod_\mathfrak p \left( 1 + \frac{\psi_\chi(\mathfrak p)}{N(\mathfrak p)^s}\right)=\frac{ L(s,\psi_\chi)}{L(2s, \psi_\chi^2)}.
\]
Since $\mathfrak n$ is not an $n$-th  power, there is some prime $\mathfrak p \not\in S$ dividing the conductor of $\chi_\mathfrak a$, note that $\mathfrak p \nmid \mathfrak c$. Consequently, $\psi_\chi$ is non-principal and so $L(s,\psi_\chi)$ is holomorphic for $\Re s>\frac 12$. Since the norm of the conductor of $\chi_\mathfrak a$ is $\ll N(\rad \mathfrak a)$ and the removed Euler factors corresponding to $\mathfrak b$ are $\ll N(\rad \mathfrak b)^\varepsilon$, standard convexity bounds give $L(s,\psi_\chi) \ll N(\rad \mathfrak n)^{1/4+\varepsilon}(1+|t|)^{[K:\Q]/4+\varepsilon}$ bounding the numerator of $D_\chi(s)$. Also, $|L(2s,\psi_\chi^2)| \gg_\varepsilon 1$ bounding the denominator of $D_\chi(s)$. With the rapid decay of $\widehat{\Phi}(s)$ on vertical lines, we shift the contour to $\Re s = \tfrac 12+\varepsilon$, and are left with an error term of $O(N(\rad \mathfrak n)^{1/4+\varepsilon} X^{1/2+\varepsilon})$. Finally dividing by Lemma \ref{familysizelem} gives the stated error term in Lemma \ref{expected_character}.
\end{proof}
\end{lem}
By definition, each prime factor of $\mathfrak m, \mathfrak n$ is bounded by $X^{1/(\log\log X)^2}$, and there are precisely $k$ such factors. Hence, $N(\mathfrak m \mathfrak n) \leq X^{k/(\log\log X)^2}$, which by choosing $X$ large enough makes \eqref{expanded1} a short Dirichlet polynomial of length an arbitrarily small power of $X$. Therefore, Lemma \ref{expected_character} shows that the contribution to (\ref{expanded1}) from the terms with $\mathfrak m \mathfrak n^{n-1}$ a non-$n$-th power or the error terms from $\mathfrak m \mathfrak n^{n-1}$ an $n$-th power are negligible, of size $o(1)$.

We now deal with the contribution from the main terms where $\mathfrak m \mathfrak n^{n-1}$ is an $n$-th power. First we show that the off-diagonal terms are also negligible. For such a term there must be strictly fewer than $k/2$ distinct prime factors of $\mathfrak m \mathfrak n^{n-1}$. It follows that the contribution from off-diagonal $n$-th power terms is 
\[
\ll \left( \sum_{\mathfrak p \in P} \frac{1}{N(\mathfrak p)} \right)^{\lfloor k/2-1/2\rfloor}.
\]
This is $o\left((\log\log X)^{k/2}\right)$ and so is absorbed into the error term of Proposition \ref{pmoments}. 

If $k$ is odd, then there are no terms on the diagonal, and so Proposition \ref{pmoments} follows in this case. It remains to show that when $k$ is even, the diagonal terms give $M_k+o(1)$. Indeed, from (\ref{expanded1}) we find
\[
\E_X^\Phi\left[ (\Re \mathcal P(\mathfrak q))^k \right] = \frac{k!}{2^k(k/2)!}\left( \sum_{\mathfrak p \in P} \frac{1}{N(\mathfrak p)+1}\right)^{k/2}+o\left((\log\log X)^{k/2}\right).
\]
Finally, applying a suitable Mertens' theorem gives
\[
\E_X^\Phi\left[ (\Re \mathcal P(\mathfrak q))^k \right] = \frac{k!}{2^{k/2}(k/2)!} \left(\frac 12 \log\log X\right)^{k/2} + o\left((\log\log X)^{k/2}\right),
\]
where $M_k = \frac{k!}{2^{k/2}(k/2)!}$ is the $k$-th moment of a standard normal distribution.
\end{proof}
\section{Proof of Proposition \ref{kequals-1}}
\begin{proof}
We proceed similarly to the proof of Proposition \ref{pmoments}, expanding with the definition of $\mathcal P(\mathfrak q)$ to obtain
\begin{align}
\label{initialtwistedbound}
\E_X^\Phi \left[ |L(\tfrac 12, \chi_\mathfrak q)|^2 E_\ell(-2 \Re \mathcal P(\mathfrak q)) \right] = \sum_{\substack{\mathfrak m, \mathfrak n \\ \Omega(\mathfrak m \mathfrak n)\leq \ell}} \frac{(-1)^{\Omega(\mathfrak m \mathfrak n)}\E_X^\Phi \left[ |L(\tfrac 12, \chi_\mathfrak q)|^2 \chi_\mathfrak q(\mathfrak m \mathfrak n^{n-1})\right] }{N(\mathfrak m \mathfrak n)^{1/2}w(\mathfrak m)w(\mathfrak n)}.
\end{align}
We would now like to replace the twisted second moment $\E_X^\Phi \left[ |L(\tfrac 12, \chi_\mathfrak q)|^2 \chi_\mathfrak q(\mathfrak m \mathfrak n^{n-1})\right]$ with a suitable estimate. The key input here comes from very recent work of Diaconu, Ion, Paşol, and Popa \cite{diaconu-ion-pasol-popa}, in which they obtain asymptotics for the twisted second moments with a power-saving error term for all $n \geq 3$. This extends a recent result of David, de Faveri, Dunn, and Stucky who obtained such an asymptotic for the case $n=3$. We adapt some notation and specialize Theorem 10.5.2 from their paper into the following Lemma, whose derivation from the cited result is given in Section 8.
\begin{lem}[\cite{diaconu-ion-pasol-popa}, Theorem 10.5.2 adapted]
\label{expected_twistedlem}
Fix $\varepsilon>0$, let $\mathfrak m,\mathfrak n$ be integral ideals coprime to $S$ and set $P = \{ \mathfrak p \not\in S \ : \ L_1 \leq N(\mathfrak p) \leq L_2\}$ for $L_2>L_1 \geq 2$. Then
\begin{multline}
\label{expected_twisted}
\E_X^\Phi \left[ | L(\tfrac 12 , \chi_\mathfrak q)|^2 \chi_\mathfrak q(\mathfrak m \mathfrak n^{n-1}) \right] = D \times A_1(\mathfrak m, \mathfrak n,\tfrac 12) \left( \log X + \frac{\widehat{\Phi}'(1)}{\widehat{\Phi}(1)} + A_2(\mathfrak m, \mathfrak n, \tfrac 12)+K_0\right) \\ + O(X^{\delta+\varepsilon-1} N(\rad\mathfrak m \mathfrak n)^{(n-1/2)(1-\delta)+\varepsilon}),
\end{multline}
where $\delta = \frac{2n+1}{2n+2}$ if $n>3$ and $\delta=5/6$ if $n=3$. The constants $D,K_0$ and functions $A_1$ and $A_2$ are defined in Section 8. We note that $A_1 A_2 = \frac{d}{ds} A_1$, and that for $k \in \R$ and $|\tfrac 12 -s |\ll \frac{1}{\log 2L_2}$:
\begin{multline}
\label{Abound}
\left|\sum_{\mathfrak m, \mathfrak n}  \frac{A_1(\mathfrak m, \mathfrak n,s)k^{\Omega(\mathfrak m \mathfrak n)}}{N(\mathfrak m \mathfrak n)^{1/2} w(\mathfrak m) w(\mathfrak n)}\right|
\text{ and }
\left|\sum_{\mathfrak m, \mathfrak n}  \frac{|A_1(\mathfrak m, \mathfrak n,s)|k^{\Omega(\mathfrak m \mathfrak n)}}{N(\mathfrak m \mathfrak n)^{1/2} w(\mathfrak m) w(\mathfrak n)}\right|
\\
\leq \exp \left((k^2+2k) \sum_{\mathfrak p \in P} \frac{1}{N(\mathfrak p)} + O_k\left( \sum_{\mathfrak p \in P} |\tfrac 12 - s|\frac{\log N(\mathfrak p)}{N(\mathfrak p)} + \frac{1}{N(\mathfrak p)^{3/2}} \right) \right).
\end{multline}
Furthermore, we note that $A_1$ is a non-negative multiplicative function and that $A_2$ is a non-positive additive function. Lastly, we shall often write $A_1(\mathfrak m, \mathfrak n)$ and $A_2(\mathfrak m, \mathfrak n)$ to mean $A_1(\mathfrak m, \mathfrak n, \tfrac 12)$ and $A_2(\mathfrak m, \mathfrak n, \tfrac 12)$ respectively.
\end{lem}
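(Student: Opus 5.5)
The plan is to derive the lemma from Theorem 10.5.2 of \cite{diaconu-ion-pasol-popa} by specialising their twisted second moment to our smoothing $\Phi$ and to the twist $\chi_\mathfrak q(\mathfrak m\mathfrak n^{n-1})$, and then to verify the stated properties of $A_1$ and $A_2$ directly from the Euler products.

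\textbf{Step 1: the main term.} Their theorem gives, for a twist by an ideal $\mathfrak t$, a main term of the form $\Res_{s=1}$ of $\widehat\Phi(s)X^s$ times a Dirichlet series with a double pole at $s=1$. Expanding this residue produces $X\widehat\Phi(1)\,c(\mathfrak t)\bigl(\log X+\widehat\Phi'(1)/\widehat\Phi(1)+c'(\mathfrak t)/c(\mathfrak t)+K_0\bigr)$. Dividing by Lemma \ref{familysizelem} absorbs $\widehat\Phi(1)X$ and the local factors at $S$ into the constant $D$. We take $\mathfrak t=\mathfrak m\mathfrak n^{n-1}$ and write the twist-dependent coefficient as $A_1(\mathfrak m,\mathfrak n,\tfrac12)$. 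It is defined as a product over $\mathfrak p\mid\mathfrak m\mathfrak n$ of local factors depending on $s$, with $s$ being the shift variable in the second moment. By construction $A_2=\partial_s\log A_1$, so $A_1A_2=\frac{d}{ds}A_1$.

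\textbf{Step 2: the error term.} The error term in their theorem depends polynomially on the norm of the twist, but only through its radical up to $n$-th powers, since $\chi_\mathfrak q$ is $n$-th order. This gives the stated exponent $(n-\tfrac12)(1-\delta)$ in $N(\rad\mathfrak m\mathfrak n)$; relative to the family size it becomes $X^{\delta-1+\varepsilon}$.

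\textbf{Step 3: properties of $A_1$ and $A_2$.} Multiplicativity of $A_1$ is immediate from the Euler product, and additivity of $A_2$ then follows. Non-negativity of $A_1$ and non-positivity of $A_2$ come from inspecting the explicit local factors at $s=\tfrac12$. At a prime where the twist exponent $a$ is an $n$-th power residue class, the local factor is a ratio of geometric series in $N(\mathfrak p)^{-1/2}$, roughly $(a+1)/N(\mathfrak p)^{\,\cdot}$ type expressions. These are positive, and they decrease in $s$ near $\tfrac12$, which gives $A_2\le 0$.

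\textbf{Step 4: the bound \eqref{Abound}.} This is the main obstacle. Both sums factor as Euler products over $\mathfrak p\in P$. The local factor at $\mathfrak p$ is
\[
\sum_{a,b\ge0}\frac{A_1(\mathfrak p^a,\mathfrak p^b,s)k^{a+b}}{N(\mathfrak p)^{(a+b)/2}a!\,b!}.
\]
We then need to check the following contributions:
\begin{itemize}
\item The terms with $a+b=1$ contribute a quantity of order $N(\mathfrak p)^{-1}$ from the $A_1$ factor. Concretely, $A_1(\mathfrak p,1)\approx A_1(1,\mathfrak p)\approx N(\mathfrak p)^{-1/2}$, so these terms give $2k/N(\mathfrak p)$.
\item The terms with $a+b=2$ contribute $k^2/N(\mathfrak p)$. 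For $n\ge3$ only the $\mathfrak p\cdot\mathfrak p^{n-1}$ pairing is diagonal, giving $A_1(\mathfrak p,\mathfrak p)\approx1$, while $\mathfrak p^2$ alone is off-diagonal with size $N(\mathfrak p)^{-1}$.
\item All remaining terms are $O_k(N(\mathfrak p)^{-3/2})$.
\end{itemize}
Replacing $s$ by $\tfrac12$ costs $O(|\tfrac12-s|\log N(\mathfrak p)/N(\mathfrak p))$, by Taylor expansion of $N(\mathfrak p)^{-s}$, valid since $|s-\tfrac12|\log N(\mathfrak p)\ll1$. Finally, using $1+x\le e^x$ yields the exponential bound, and the same argument works with $|A_1|$ in place of $A_1$.

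The delicate part is identifying the exact local factors in their notation and confirming that the linear coefficient is exactly $2k$. I would verify this first, by computing $A_1$ on primes and prime squares directly from their formula.
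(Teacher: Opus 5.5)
Your proposal follows the paper's route: the asymptotic is read off from Theorem 10.5.2 of \cite{diaconu-ion-pasol-popa} and normalized by the family size, and \eqref{Abound} is proved by factoring over $\mathfrak p\in P$, extracting $2k/N(\mathfrak p)$ from $(a,b)=(1,0),(0,1)$ and $k^2/N(\mathfrak p)$ from $(1,1)$. The paper carries this out with the explicit local factors $g(\mathfrak p,s)=(1+N(\mathfrak p)^{-ns})/P_n(\mathfrak p,s)$ and $h(\mathfrak p^j,s)=(N(\mathfrak p)^{-js}+N(\mathfrak p)^{-(n-j)s})/P_n(\mathfrak p,s)$, together with the reflection $h(\mathfrak p^{n-j},s)=h(\mathfrak p^j,s)$.

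Two small corrections. First, $A_1(\mathfrak p^a,\mathfrak p^b,s)$ depends only on $(a-b)\bmod n$; it is not an $(a+1)$-type expression as your Step 3 suggests, and this dependence is what makes the $a+b\ge3$ tail uniformly $O_k(N(\mathfrak p)^{-3/2})$. Second, for $n=3$ one has $A_1(\mathfrak p^2,1)=h(\mathfrak p,s)\asymp N(\mathfrak p)^{-1/2}$ rather than $N(\mathfrak p)^{-1}$, although that term is still $O_k(N(\mathfrak p)^{-3/2})$.
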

We take $L_1=2$ and $L_2 = X^{1/(\log\log X)^2}$. Before we can insert Lemma \ref{expected_twistedlem} into \eqref{initialtwistedbound} we need to show that truncating the number of prime factors of $\mathfrak m \mathfrak n$ to $\ell$ contributes negligible error. Noting non-negativity of $A_1$, Rankin's trick gives
\[
\sum_{\substack{\mathfrak m, \mathfrak n \\ \Omega(\mathfrak m \mathfrak n) > \ell}}  A_1(\mathfrak m, \mathfrak n) \frac{(-1)^{\Omega(\mathfrak m \mathfrak n)}}{N(\mathfrak m \mathfrak n)^{1/2} w(\mathfrak m) w(\mathfrak n)} \ll e^{-\ell} \sum_{\mathfrak m, \mathfrak n} A_1(\mathfrak m, \mathfrak n)\frac{e^{\Omega(\mathfrak m \mathfrak n)}}{N(\mathfrak m \mathfrak n)^{1/2} w(\mathfrak m) w(\mathfrak n)},
\]
which by \eqref{Abound} at $s=\tfrac 12$ and Mertens' second theorem is
\[
\ll \frac{e^{-\ell}}{(\log\log X)^{2e^2+4e}} (\log X)^{e^2+2e}.
\]
Recall that $\ell = 20 \lceil \log\log X \rceil$, and since $20 >e^2+2e+1$ then the above is $o(1/\log X)$. Now we bound the tail from $A_1A_2$, Rankin's trick gives:
\[
\left|\sum_{\substack{\mathfrak m, \mathfrak n \\ \Omega(\mathfrak m \mathfrak n) > \ell}}   \frac{A_1A_2(\mathfrak m, \mathfrak n)(-1)^{\Omega(\mathfrak m \mathfrak n)}}{N(\mathfrak m \mathfrak n)^{1/2} w(\mathfrak m) w(\mathfrak n)}\right| \ll e^{-\ell} \sum_{\mathfrak m, \mathfrak n} \frac{|A_1A_2(\mathfrak m, \mathfrak n)| e^{\Omega(\mathfrak m \mathfrak n)}}{N(\mathfrak m \mathfrak n)^{1/2} w(\mathfrak m) w(\mathfrak n)}.
\]
Since $|A_1A_2|=-A_1A_2$, we can apply Cauchy's estimate on the disk $|\tfrac 12 - s| \ll \frac{1}{\log L_2}$ to \eqref{Abound} to find that the above is
\[
\ll \frac{  e^{-\ell}\log L_2}{(\log\log X)^{2e^2+4e}} (\log X)^{e^2+2e},
\]
which as before will be $o(1/\log X)$.

It only remains to show that summing over the error term from Lemma \ref{expected_twistedlem} gives a negligible contribution. Indeed, \eqref{initialtwistedbound} is a short Dirichlet polynomial of length $\ll X^{O(1/\log\log X)}$, and $N(\rad \mathfrak m \mathfrak n) \leq X^{O(1/\log\log X)}$, so each individual error term is bounded by
\[
\ll X^{\delta+\varepsilon-1+n o(1)}.
\]
Summing over $\mathfrak m, \mathfrak n$ then gives a total contribution bounded by
\[
X^{\delta+\varepsilon-1+o(1)}\sum_{\mathfrak m, \mathfrak n} \frac{1}{w(\mathfrak m)w(\mathfrak n)} \ll X^{\delta+\varepsilon-1+o(1)}.
\]
Evidently this becomes negligible as $X \to \infty$.

Now, inserting Lemma \ref{expected_twistedlem} into \eqref{initialtwistedbound} and using Mertens' theorem yields
\[
\E_X^\Phi \left[ |L(\tfrac 12, \chi_\mathfrak q)|^2 E_\ell(-2 \Re \mathcal P(\mathfrak q)) \right] \ll \left( \log X + \frac{\widehat{\Phi}'(1)}{\widehat{\Phi}(1)}+ 1+ K_0 \right) \frac{(\log\log X)^2}{\log X}+O(1).
\]
The constants $D$ and $K_0$ only depend on the fixed data of choice of number field and prime ideals $S$. Thus the above is $\ll (\log\log X)^2$ as desired. 
\end{proof}
\section{Proof of Proposition \ref{bound1}}
\begin{proof}
We split the proof of Proposition \ref{bound1} into two parts. It suffices to show that
\[
E_1:=\E_X^\Phi\!\left[ \prod_{j=1}^R \mathcal B_j(\mathfrak q) \right]
\ll (\log X)^{k^2},
\]
and
\[
E_2:=\E_X^\Phi\!\left[
\sum_{r=0}^{R-1}
\left(
\prod_{j=1}^r \mathcal B_j(\mathfrak q)
\right)
\left(
\frac{2e^2\,\Re \mathcal P_{r+1}(\mathfrak q)}
{\ell_{r+1}}
\right)^{\ell_{r+1}}
\right]
\ll (\log X)^{k^2}.
\]
We first deal with bounding $E_1$. Inserting the definition of $\mathcal B_j$ and expanding as in the proof of Proposition \ref{kequals-1} yields
\[
E_1 = \E_X^\Phi\left[\prod_{j=1}^R \sum_{\substack{\mathfrak m_j,\mathfrak n_j \\ \Omega(\mathfrak m_j \mathfrak n_j) \leq \ell_j}} \frac{k^{\Omega(\mathfrak m_j \mathfrak n_j)} \chi_\mathfrak q(\mathfrak m_j \mathfrak n_j^{n-1})}{N(\mathfrak m_j \mathfrak n_j)^{1/2} w(\mathfrak m_j) w(\mathfrak n_j)}\right],
\]
where every prime factor of $\mathfrak m_j \mathfrak n_j$ lies in $P_j$. Since the sets of primes $P_j$ are disjoint, setting $\mathfrak m = \mathfrak m_1...\mathfrak m_R$, $\mathfrak n = \mathfrak n_1...\mathfrak n_R$, we find
\[
E_1 = \sum_{\substack{\mathfrak m, \mathfrak n \\ \Omega(\mathfrak m_j \mathfrak n_j) \leq \ell_j}} \frac{k^{\Omega(\mathfrak m \mathfrak n)}}{N(\mathfrak m \mathfrak n)^{1/2} w(\mathfrak m) w(\mathfrak n)} \E_X^\Phi[\chi_\mathfrak q(\mathfrak m \mathfrak n^{n-1})].
\]
We note that this is a Dirichlet sum of length at most $(X^{1/\ell_1^2})^{\ell_1}...(X^{1/\ell_R^2})^{\ell_R} = X^{1/\ell_1+...+1/\ell_R}\ll X^{\tfrac{1}{10000n}}$ by our choice of $\ell_j$. We now apply Lemma \ref{expected_character} as in the proof of Proposition \ref{pmoments}. For the same reason, the contribution from terms with $\mathfrak m \mathfrak n^{n-1}$ a non-$n$-th power will be negligible, of size $O(X^{-1/2+\tfrac{1}{40000n}})$. Therefore, we find
\begin{align}
\label{e1}
E_1 \ll \sum_{\mathfrak m\mathfrak n^{n-1}= \cube^n} \frac{k^{\Omega(\mathfrak m \mathfrak n)}}{N(\mathfrak m \mathfrak n)^{1/2} w(\mathfrak m) w(\mathfrak n)} + O(X^{-1/2+\tfrac{1}{40000n}+\tfrac{1}{10000n}}).
\end{align}
Since all the summands are nonnegative, we can extend it to a sum over $\mathfrak m,\mathfrak n$ with arbitrarily many prime factors. Let $x_\mathfrak p = \frac{k}{N(\mathfrak p)^{1/2}}$, then writing (\ref{e1}) as an Euler product gives
\[
E_1 \ll \prod_{j=1}^R \prod_{\mathfrak p \in P_j} \sum_{i=0}^\infty \sum_{\substack{ a+b=i \\ a\equiv b \bmod n}} \frac{x_\mathfrak p^i}{ a! b!}.
\]
Therefore,
\[
E_1\ll \exp\left(\sum_{j=1}^R\sum_{\mathfrak p \in P_j}   \log\left(1+x_\mathfrak p^2 + O(x_\mathfrak p^3)\right)\right),
\]
which can be bounded above by
\[
\exp \left( \sum_{j=1}^R \sum_{\mathfrak p \in P_j} \frac{k^2}{N(\mathfrak p)} + O\left(\frac{1}{N(\mathfrak p)^{3/2}}\right) \right).
\]
Finally, note that the sum over $\frac{1}{N(\mathfrak p)^{3/2}}$ is bounded uniformly, and that the primes in $P_1,...,P_R$ are bounded in norm by $X^{\tfrac{1}{1000n}}$. Therefore, by Mertens' theorem we obtain
\[
E_1 \ll \exp\left( k^2\log\log X\right) = (\log X)^{k^2}.
\]
It remains to bound $E_2$. The argument is similar to the estimate for $E_1$. First, expanding gives
\begin{multline}
\label{e2}
E_2 = \E_X^\Phi\left[\sum_{r=0}^{R-1}\left( \frac{e^2}{\ell_{r+1}}\right)^{\ell_{r+1}} \ell_{r+1}!\prod_{j=1}^r \left( \sum_{\substack{\mathfrak m_j, \mathfrak n_j \\ \Omega(\mathfrak m_j \mathfrak n_j) \leq \ell_j}} \frac{k^{\Omega(\mathfrak m_j \mathfrak n_j)} \chi_\mathfrak q(\mathfrak m_j \mathfrak n_j^{n-1})}{N(\mathfrak m_j\mathfrak n_j)^{1/2}w(\mathfrak m_j) w(\mathfrak n_j)}\right)\right. 
\\
\left. \times \left( \sum_{\substack{\mathfrak m_{r+1}, \mathfrak n_{r+1} \\ \Omega(\mathfrak m_{r+1}\mathfrak n_{r+1} ) = \ell_{r+1}}} \frac{\chi_\mathfrak q(\mathfrak m_{r+1} \mathfrak n_{r+1}^{n-1})}{N(\mathfrak m_{r+1} \mathfrak n_{r+1})^{1/2} w(\mathfrak m_{r+1}) w(\mathfrak n_{r+1})}\right) \right].
\end{multline}
As before, we define $\mathfrak m = \mathfrak m_1...\mathfrak m_{r+1}$ and $\mathfrak n = \mathfrak n_1... \mathfrak n_{r+1}$. Then, using the fact that $P_j$ are disjoint, we find
\[
E_2 = \sum_{r=0}^{R-1} \left( \frac{e^2}{\ell_{r+1}}\right)^{\ell_{r+1}} \ell_{r+1}! \sum_{\mathfrak m, \mathfrak n} \frac{k^{\Omega(\mathfrak m \mathfrak n) - \Omega(\mathfrak m_{r+1} \mathfrak n_{r+1})}}{N(\mathfrak m \mathfrak n)^{1/2} w(\mathfrak m) w(\mathfrak n)} \E_X^\Phi[\chi_\mathfrak q(\mathfrak m \mathfrak n^{n-1})].
\]
Now, by Lemma \ref{expected_character}, any terms with $\mathfrak m \mathfrak n^{n-1}$ a non-$n$-th power will contribute negligibly. Therefore,
\begin{multline}
E_2 \ll \sum_{r=0}^{R-1}\left( \frac{e^2}{\ell_{r+1}}\right)^{\ell_{r+1}} \ell_{r+1}!\prod_{j=1}^r \left( \sum_{\substack{\mathfrak m_j, \mathfrak n_j \\ \Omega(\mathfrak m_j \mathfrak n_j) \leq \ell_j \\ \mathfrak m_j \mathfrak n_j^{n-1} = \cube^n}} \frac{k^{\Omega(\mathfrak m_j \mathfrak n_j)} }{N(\mathfrak m_j\mathfrak n_j)^{1/2}w(\mathfrak m_j) w(\mathfrak n_j)}\right)
\\
\times \left( \sum_{\substack{\mathfrak m_{r+1}, \mathfrak n_{r+1} \\ \Omega(\mathfrak m_{r+1}\mathfrak n_{r+1} ) = \ell_{r+1} \\ \mathfrak m_{r+1} \mathfrak n_{r+1}^{n-1}=\cube^n}} \frac{1}{N(\mathfrak m_{r+1} \mathfrak n_{r+1})^{1/2} w(\mathfrak m_{r+1}) w(\mathfrak n_{r+1})}\right) +O(X^{-1/2+\tfrac{1}{50}}).
\end{multline}
The product of the first bracketed sums above can be bounded, in the same way as $E_1$, by
\[
\prod_{j=1}^r \left( \sum_{\substack{\mathfrak m_j, \mathfrak n_j \\ \Omega(\mathfrak m_j \mathfrak n_j) \leq \ell_j \\ \mathfrak m_j \mathfrak n_j^{n-1} = \cube^n}} \frac{k^{\Omega(\mathfrak m_j \mathfrak n_j)} }{N(\mathfrak m_j\mathfrak n_j)^{1/2}w(\mathfrak m_j) w(\mathfrak n_j)}\right) \ll \exp \left( k^2\sum_{j=1}^r \sum_{\mathfrak p \in P_j} \frac{1}{N(\mathfrak p)}\right).
\]
The second bracketed term can be bounded using Rankin's trick, observing that by positivity,
\begin{multline}
 \sum_{\substack{\mathfrak m_{r+1}, \mathfrak n_{r+1} \\ \Omega(\mathfrak m_{r+1}\mathfrak n_{r+1} ) = \ell_{r+1} \\ \mathfrak m_{r+1} \mathfrak n_{r+1}^{n-1}=\cube^n}} \frac{1}{N(\mathfrak m_{r+1} \mathfrak n_{r+1})^{1/2} w(\mathfrak m_{r+1}) w(\mathfrak n_{r+1})} \\ \ll e^{-2\ell_{r+1}} \times \sum_{\substack{\mathfrak m_{r+1}, \mathfrak n_{r+1} \\ \mathfrak m_{r+1} \mathfrak n_{r+1}^{n-1} =  \cube^n}} \frac{e^{2\Omega(\mathfrak m_{r+1} \mathfrak n_{r+1})}}{N(\mathfrak m_{r+1} \mathfrak n_{r+1})^{1/2} w(\mathfrak m_{r+1})w(\mathfrak n_{r+1})}.
\end{multline}
The above sum can then be bounded as in $E_1$ to obtain
\[
 \sum_{\substack{\mathfrak m_{r+1}, \mathfrak n_{r+1} \\ \Omega(\mathfrak m_{r+1}\mathfrak n_{r+1} ) = \ell_{r+1} \\ \mathfrak m_{r+1} \mathfrak n_{r+1}^{n-1}=\cube^n}} \frac{1}{N(\mathfrak m_{r+1} \mathfrak n_{r+1})^{1/2} w(\mathfrak m_{r+1}) w(\mathfrak n_{r+1})} \ll  \exp \left( -2\ell_{r+1}+e^4\sum_{\mathfrak p \in P_{r+1}} \frac{1}{N(\mathfrak p)}\right).
\]
Since $\sum_{\mathfrak p \in P_{r+1}} \frac{1}{N(\mathfrak p)} \leq \ell_{r+1}/2700$, this is bounded above by $e^{-1.9 \ell_{r+1}}$.

Therefore we find

\[
E_2 \ll  \left( \frac{e^2}{\ell_{1}} \right)^{\ell_{1}} \ell_1! e^{-1.9 \ell_1} +   \sum_{r=1}^{R-1} \left( \frac{e^2}{\ell_{r+1}} \right)^{\ell_{r+1}} \ell_{r+1} ! \exp \left( k^2 \sum_{N(\mathfrak p) \leq X^{1/\ell_r^2}} \frac{1}{N(\mathfrak p)}\right) e^{-1.9\ell_{r+1}}.
\]
Applying Stirling's formula and Mertens' theorem yields
\[
E_2 \ll \sum_{r=0}^{R-1} e^{-\ell_{r+1}/2}(\log X)^{k^2} \ll (\log X)^{k^2}
\]
Combining this bound with the bound for $E_1$ then yields Proposition \ref{bound1}.
\end{proof}
\section{Proof of Proposition \ref{bound2}}
The strategy for proving Proposition \ref{bound2} is largely the same as in the previous section. We reuse the notation, splitting the problem into two parts:
\[
E_1:= \E_X^\Phi \left[|L(\tfrac{1}{2},\chi_\mathfrak q)|^2 \prod_{j=1}^R \mathcal A_j(\mathfrak q) \right]\ll (\log X)^{k^2}
\] 
and
\[
E_2:= \E_X^\Phi \left[ |L(\tfrac{1}{2},\chi_\mathfrak q)|^2 \sum_{r=0}^{R-1} \prod_{j=1}^r \mathcal A_j(\mathfrak q) \left( \frac{2e^2 \Re \mathcal P_{r+1}(\mathfrak q)}{\ell_{r+1}}\right)^{\ell_{r+1}} \right] \ll (\log X)^{k^2}.
\]
As before, we first deal with bounding $E_1$. Inserting the definition of $\mathcal A_j$ and expanding gives
\begin{align}
\label{estimate1}
E_1 = \sum_{\substack{ \mathfrak m, \mathfrak n \\ \Omega(\mathfrak m_j \mathfrak n_j) \leq \ell_j}} \frac{(k-1)^{\Omega(\mathfrak m \mathfrak n)}}{N(\mathfrak m \mathfrak n)^{1/2} w(\mathfrak m) w(\mathfrak n)} \E_X^\Phi\left[ |L(\tfrac{1}{2},\chi_\mathfrak q)|^2 \chi_\mathfrak q(\mathfrak m \mathfrak n^{n-1}) \right],
\end{align}
where $\mathfrak m = \mathfrak m_1...\mathfrak m_R$ and $\mathfrak n = \mathfrak n_1...\mathfrak n_R$. 

We proceed similarly to the proof of Proposition \ref{kequals-1}, but this time we apply Rankin's trick to each set of primes $P_j$ separately. Let $|\tfrac 12-s| \ll \frac{\ell_R^2}{\log X}$, then 
\begin{multline*}
\left|\sum_{\substack{\mathfrak m_j, \mathfrak n_j \\ \Omega(\mathfrak m_j \mathfrak n_j)>\ell_j}} \frac{A_1(\mathfrak m_j,\mathfrak n_j,s)(k-1)^{\Omega(\mathfrak m_j \mathfrak n_j)}}{N(\mathfrak m_j \mathfrak n_j)^{1/2} w(\mathfrak m_j) w(\mathfrak n_j)}\right|
\\
\leq e^{-\ell_j} \sum_{\mathfrak m_j, \mathfrak n_j} |A_1(\mathfrak m_j, \mathfrak n_j,s)| \frac{|ek-e|^{\Omega(\mathfrak m_j \mathfrak n_j)}}{N(\mathfrak m_j \mathfrak n_j)^{1/2} w(\mathfrak m_j) w(\mathfrak n_j)},
\end{multline*}
which by \eqref{Abound} with $L_1=X^{1/\ell_{j-1}^2}$ ($L_1=2$ if $j=1$), $L_2=X^{1/\ell_j^2}$ is bounded by 
\begin{multline*}
\leq \exp \left(O_k\left(\sum_{\mathfrak p \in P_j} |\tfrac 12 - s| \frac{\log N(\mathfrak p)}{N(\mathfrak p)}+ \frac{1}{N(\mathfrak p)^{3/2}}\right) \right)
\\
\times e^{-\ell_j} \exp \left( ((e-ek)^2+2(e-ek)) \sum_{\mathfrak p \in P_j} \frac{1}{N(\mathfrak p)}\right)
\\
\leq \exp \left(O_k\left(\sum_{\mathfrak p \in P_j}|\tfrac 12 - s| \frac{\log N(\mathfrak p)}{N(\mathfrak p)}+\frac{1}{N(\mathfrak p)^{3/2}}\right) \right)
\\
\times e^{-\ell_j/2}  \exp \left( (k^2-1) \sum_{\mathfrak p \in P_j} \frac{1}{N(\mathfrak p)}\right) .
\end{multline*}

Consequently, \eqref{Abound} along with multiplicativity of $A_1$ yields the following bound:
\begin{multline}
\label{truncatedAbound}
\left|\sum_{\substack{\mathfrak m, \mathfrak n \\ \Omega(\mathfrak m_j \mathfrak n_j)\leq \ell_j}} \frac{A_1(\mathfrak m,\mathfrak n,s)(k-1)^{\Omega(\mathfrak m \mathfrak n)}}{N(\mathfrak m \mathfrak n)^{1/2} w(\mathfrak m) w(\mathfrak n)}\right|
\\
\leq \exp \left(O\left(\sum_{N(\mathfrak p) \leq X^{1/\ell_R^2}}|\tfrac 12 - s|\frac{\log N(\mathfrak p)}{N(\mathfrak p)}+\frac{1}{N(\mathfrak p)^{3/2}}\right) \right) \\
\times \prod_{j=1}^R (1+O(e^{-\ell_j/2})) \exp \left( (k^2-1) \sum_{N(\mathfrak p) \leq X^{1/\ell_R^2}} \frac{1}{N(\mathfrak p)}\right).
\end{multline}
For $s=\tfrac 12$, the error term at the front is $\ll 1$, and so is the product over $j$. Thus by Mertens' theorem we find
\[
\sum_{\substack{\mathfrak m, \mathfrak n \\ \Omega(\mathfrak m_j \mathfrak n_j)\leq \ell_j}} \frac{A_1(\mathfrak m,\mathfrak n)(k-1)^{\Omega(\mathfrak m \mathfrak n)}}{N(\mathfrak m \mathfrak n)^{1/2} w(\mathfrak m) w(\mathfrak n)} \ll \frac{(\log X)^{k^2-1}}{\ell_R^{2k^2-2}} \ll (\log X)^{k^2-1}.
\]
We now deal with the contribution from the $A_1A_2$ terms. By Cauchy's estimate applied to \eqref{truncatedAbound} in the disk $|\tfrac 12 -s| \ll \frac{\ell_R^2}{\log X}$ we obtain
\begin{multline}
\left|\sum_{\substack{\mathfrak m, \mathfrak n \\ \Omega(\mathfrak m_j \mathfrak n_j)\leq \ell_j}} \frac{A_1A_2(\mathfrak m,\mathfrak n)(k-1)^{\Omega(\mathfrak m \mathfrak n)}}{N(\mathfrak m \mathfrak n)^{1/2} w(\mathfrak m) w(\mathfrak n)}\right|
\\
\leq \frac{\log X}{\ell_R^2}\exp \left(O\left(\sum_{N(\mathfrak p) \leq X^{1/\ell_R^2}}\frac{\log N(\mathfrak p)\ell_R^2}{N(\mathfrak p)\log X}+\frac{1}{N(\mathfrak p)^{3/2}}\right) \right) \\
\times \prod_{j=1}^R (1+O(e^{-\ell_j/2})) \exp \left( (k^2-1) \sum_{N(\mathfrak p) \leq X^{1/\ell_R^2}} \frac{1}{N(\mathfrak p)}\right).
\end{multline}
By Mertens' theorem the error term from the sum over $\mathfrak p$ is $O(1)$, so we find
\[
\sum_{\substack{\mathfrak m, \mathfrak n \\ \Omega(\mathfrak m_j \mathfrak n_j)\leq \ell_j}} \frac{A_1A_2(\mathfrak m,\mathfrak n)(k-1)^{\Omega(\mathfrak m \mathfrak n)}}{N(\mathfrak m \mathfrak n)^{1/2} w(\mathfrak m) w(\mathfrak n)} \ll (\log X)^{k^2}.
\]
We note that summing over the error term from Lemma \ref{expected_twistedlem} contributes negligibly, by the same argument as in the previous section. Consequently, inserting Lemma \ref{expected_twistedlem} into \eqref{estimate1}, and using the bounds just proven, we deduce that
\[
E_1 \ll (\log X)^{k^2}.
\]
We now proceed to bound $E_2$. Analogously to the argument in Section 6, we find
\[
E_2 = \sum_{r=0}^{R-1} \left( \frac{e^2}{\ell_{r+1}}\right)^{\ell_{r+1}} \ell_{r+1}! \sum_{\mathfrak m, \mathfrak n} \frac{(k-1)^{\Omega(\mathfrak m \mathfrak n)-\Omega(\mathfrak m_{r+1}\mathfrak n_{r+1})}}{N(\mathfrak m\mathfrak n)^{1/2} w(\mathfrak m)w(\mathfrak n)} \E_X^\Phi\left[ |L(\tfrac{1}{2},\chi_\mathfrak q)|^2 \chi_\mathfrak q(\mathfrak m \mathfrak n^{n-1}) \right],
\]
where $\mathfrak m=\mathfrak m_1 \dots \mathfrak m_{r+1}$ and $\mathfrak n = \mathfrak n_1\dots \mathfrak n_{r+1}$, with $\Omega(\mathfrak m_{r+1} \mathfrak n_{r+1})=\ell_{r+1}$ and $\Omega(\mathfrak m_j \mathfrak n_j) \leq \ell_j$ otherwise. We now insert Lemma \ref{expected_twistedlem}. As in the bounding of $E_1$ we first deal with the contribution from terms with $A_1$. We need to bound
\[
E_{11}:=\sum_{r=0}^{R-1} \left( \frac{e^2}{\ell_{r+1}}\right)^{\ell_{r+1}} \ell_{r+1}! \sum_{\mathfrak m, \mathfrak n} A_1(\mathfrak m, \mathfrak n) \frac{(k-1)^{\Omega(\mathfrak m \mathfrak n)-\Omega(\mathfrak m_{r+1}\mathfrak n_{r+1})}}{N(\mathfrak m\mathfrak n)^{1/2} w(\mathfrak m)w(\mathfrak n)}.
\]
By multiplicativity, we write this as
\begin{multline*}
E_{11} = \sum_{r=0}^{R-1} \left( \frac{e^2}{\ell_{r+1}}\right)^{\ell_{r+1}} \ell_{r+1}! \prod_{j=1}^r \left(\sum_{\substack{\mathfrak m_j,\mathfrak n_j \\ \Omega(\mathfrak m_j\mathfrak n_j) \leq \ell_j}} \frac{A_1(\mathfrak m_j, \mathfrak n_j)(k-1)^{\Omega(\mathfrak m_j \mathfrak n_j)}}{N(\mathfrak m_j \mathfrak n_j)^{1/2} w(\mathfrak m_j)w(\mathfrak n_j)} \right)
\\
\times \left( \sum_{\substack{\mathfrak m_{r+1},\mathfrak n_{r+1} \\ \Omega(\mathfrak m_{r+1} \mathfrak n_{r+1}) = \ell_{r+1}}} \frac{A_1(\mathfrak m_{r+1},\mathfrak n_{r+1})}{N(\mathfrak m_{r+1} \mathfrak n_{r+1})^{1/2} w(\mathfrak m_{r+1})w(\mathfrak n_{r+1})} \right).
\end{multline*}
We already bounded the first bracketed term in bounding $E_1$ (see e.g. \eqref{truncatedAbound}). The second bracketed term can be bounded by noting that by \eqref{Abound} it is 
\[
\ll e^{-2\ell_{r+1}} \exp \left( (e^4+2e^2) \sum_{\mathfrak p \in P_{r+1}} \frac{1}{N(\mathfrak p)}\right) \ll e^{-2\ell_{r+1} + (e^4+2e^2)\ell_{r+1}/2700} \ll e^{-1.9 \ell_{r+1}}.
\]
Therefore,
\begin{multline*}
E_{11} \ll \left( \frac{e^2}{\ell_{1}}\right)^{\ell_1} \ell_1! e^{-1.9 \ell_1} 
\\
+  \sum_{r=1}^{R-1}  \left( \frac{e^2}{\ell_{r+1}}\right)^{\ell_{r+1}} \ell_{r+1}! \exp \left( (k^2-1) \sum_{N(\mathfrak p) \leq X^{1/\ell_r^2}} \frac{1}{N(\mathfrak p)}\right) e^{-1.9\ell_{r+1}}.
\end{multline*}
By Stirling's formula this is
\[
\ll (\log X)^{k^2-1} \sum_{r=0}^{R-1} e^{-\ell_{r+1}/2} \ll (\log X)^{k^2-1}.
\]
We also need a corresponding bound for the $A_1A_2$ terms. Indeed, write the above bound for $E_{11}$, but replace $A_1(\mathfrak m, \mathfrak n)$ with $A_1(\mathfrak m, \mathfrak n, s)$, and take absolute values when bounding the second bracketed term. We apply Cauchy's estimate as in bounding $E_1$, which gives us an upper bound of $(\log X)^{k^2}$.
\section{Proof of Lemma \ref{expected_twistedlem}}
The only remaining step is to prove Lemma \ref{expected_twistedlem}. As stated in the Lemma, this is mainly an adaptation of Proposition 10.4.4 and Theorem 10.5.2 from \cite{diaconu-ion-pasol-popa}. We define
\[
\mathfrak a := \prod_{\substack{\mathfrak p \mid \mathfrak m \mathfrak n \\ n \nmid v_\mathfrak p(\mathfrak m \mathfrak n^{n-1})}} \mathfrak p^{v_\mathfrak p(\mathfrak m \mathfrak n^{n-1}) \bmod n} \text{ and } \mathfrak b := \prod_{\substack{\mathfrak p \mid \mathfrak m \mathfrak n \\ n \mid v_\mathfrak p(\mathfrak m \mathfrak n^{n-1})}} \mathfrak p,
\]
and set
\[
D:=\zeta_K^S (\tfrac{n}{2}) \Res_{s=1} \zeta_K^S(s) \prod_{\mathfrak p \not\in S} \frac{P_n(\mathfrak p,\tfrac 12)}{1+\frac{1}{N(\mathfrak p)}} \text{ and } K_0 := \frac{\mathcal A_1'(\tfrac 12)}{\mathcal A_1(\tfrac 12)}-\frac{A'(\tfrac 12)}{2}.
\]
(here $K_0$ is defined using the notation of \cite{diaconu-ion-pasol-popa})

Also, let
\[
A_1(\mathfrak m, \mathfrak n,s) := \frac{1}{N(\mathfrak a)^{s}} \prod_{\mathfrak p \mid \mathfrak a}  \frac{1+N(\mathfrak p)^{-(n-2v_\mathfrak p(\mathfrak a))s}}{P_n(\mathfrak p,s)} \prod_{\mathfrak p \mid \mathfrak b} \frac{1+N(\mathfrak p)^{-n s}}{P_n(\mathfrak p,s)},
\]
where
\[
P_n(\mathfrak p,s) := 1+ N(\mathfrak p)^{-1} - N(\mathfrak p)^{-1-2s} + N(\mathfrak p)^{-n s}(1 - N(\mathfrak p)^{-1} + N(\mathfrak p)^{-1-2s}). 
\]
Then we write
\[
A_2(\mathfrak m, \mathfrak n, s) := \frac{A_1'(\mathfrak m, \mathfrak n, s)}{A_1(\mathfrak m, \mathfrak n, s)},
\]
and set
\[
A_1(\mathfrak m, \mathfrak n) := A_1(\mathfrak m, \mathfrak n, \tfrac 12), \ \  A_2(\mathfrak m, \mathfrak n) := A_2(\mathfrak m, \mathfrak n, \tfrac 12).
\]
For $(\mathfrak m_1 \mathfrak n_1,\mathfrak m_2\mathfrak n_2)=1$, we have
\[
A_1(\mathfrak m_1 \mathfrak m_2, \mathfrak n_1 \mathfrak n_2) = A_1(\mathfrak m_1, \mathfrak n_1) A_1(\mathfrak m_2, \mathfrak n_2)
\]
and
\[
A_2(\mathfrak m_1 \mathfrak m_2, \mathfrak n_1 \mathfrak n_2) = A_2(\mathfrak m_1, \mathfrak n_1) + A_2(\mathfrak m_2, \mathfrak n_2).
\]
Furthermore, $A_1(\mathfrak m, \mathfrak n) \geq 0$ and $A_2(\mathfrak m, \mathfrak n) \leq 0$.

The asymptotic \eqref{expected_twisted} of Lemma \ref{expected_twistedlem} is just Theorem 10.5.2 restated in the notation of this paper, obtained from Proposition 10.4.4. We note that we have normalized by dividing by the family size \eqref{familysize}, and that the function $A_1(\mathfrak m, \mathfrak n, s) = \frac{\mathcal A_1^{(\mathfrak a \mathfrak b \overline{\mathfrak b})^*}(s)}{\mathcal A_1(s)}$ in the notation of \cite{diaconu-ion-pasol-popa}.

It remains to show the bound \eqref{Abound}. 
\begin{lem}
Assume the notation of Lemma \ref{expected_twistedlem} and let
\begin{align}
T:=\sum_{\mathfrak m, \mathfrak n} \frac{A_1(\mathfrak m, \mathfrak n, s) k^{\Omega(\mathfrak m \mathfrak n)}}{N(\mathfrak m \mathfrak n)^{1/2} w(\mathfrak m) w(\mathfrak n)}, \ \ T' := \sum_{\mathfrak m, \mathfrak n} \frac{|A_1(\mathfrak m, \mathfrak n, s)| k^{\Omega(\mathfrak m \mathfrak n)}}{N(\mathfrak m \mathfrak n)^{1/2} w(\mathfrak m) w(\mathfrak n)}.
\end{align}
Then uniformly for $|\tfrac 12 - s| \ll \frac{1}{\log 2L_2}$ and small enough implied constant, both $|T|$ and $|T'|$ are bounded above by
\[
\exp \left((k^2+2k) \sum_{\mathfrak p \in P} \frac{1}{N(\mathfrak p)} +  O_k\left(\sum_{\mathfrak p \in P} |\tfrac 12 -s| \frac{\log N(\mathfrak p)}{N(\mathfrak p)} + \frac{1}{N(\mathfrak p)^{3/2}}\right)\right).
\]
\end{lem}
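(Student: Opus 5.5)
The plan is to factor $T$ and $T'$ as Euler products over $\mathfrak p \in P$ and then bound each local factor. Three facts make this possible: $A_1(\mathfrak m,\mathfrak n,s)$ is multiplicative in the pair $(\mathfrak m,\mathfrak n)$, the weights $w$ and $k^{\Omega}$ are multiplicative, and $A_1(\mathfrak m,\mathfrak n,s)$ depends at $\mathfrak p$ only on $(a,b)=(v_\mathfrak p(\mathfrak m),v_\mathfrak p(\mathfrak n))$. We therefore write
\[
T=\prod_{\mathfrak p\in P}\Big(\sum_{a,b\ge 0}\frac{A_1(\mathfrak p^a,\mathfrak p^b,s)\,k^{a+b}}{N(\mathfrak p)^{(a+b)/2}\,a!\,b!}\Big),
\]
and similarly for $T'$ with $|A_1|$. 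Absolute convergence is immediate, because $|A_1(\mathfrak p^a,\mathfrak p^b,s)|$ is uniformly bounded for $\Re s$ near $\tfrac12$ (the factor $P_n(\mathfrak p,s)$ is $1+O(N(\mathfrak p)^{-1})$) and $\sum k^{a+b}/(a!b!)$ converges. The target then reduces to showing that each local factor is
\[
1+\frac{k^2+2k}{N(\mathfrak p)}+O_k\Big(|\tfrac12-s|\frac{\log N(\mathfrak p)}{N(\mathfrak p)}+\frac1{N(\mathfrak p)^{3/2}}\Big),
\]
after which $\log(1+x)\le x$ gives the claim. For $T'$ the same expansion applies with the same main terms, since $A_1(\cdot,\cdot,\tfrac12)\ge 0$. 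For $s$ close to $\tfrac12$, the quantity $|A_1(s)|$ differs from $A_1(\tfrac12)$ by a relative error $O(|\tfrac12-s|\log N(\mathfrak p))$, and this is absorbed into the error term.

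Next I will compute the low-order terms of the local factor. The term $(a,b)=(0,0)$ contributes $1$. For $a+b=1$, take $(a,b)=(1,0)$: here $\mathfrak a=\mathfrak p$ with $v_\mathfrak p(\mathfrak a)=1$, so
\[
A_1=N(\mathfrak p)^{-s}\bigl(1+N(\mathfrak p)^{-(n-2)s}\bigr)/P_n(\mathfrak p,s).
\]
For $n\ge3$ this equals $N(\mathfrak p)^{-s}(1+O(N(\mathfrak p)^{-1/2}))$, so the term is $kN(\mathfrak p)^{-1/2-s}$ up to $O(N(\mathfrak p)^{-3/2})$. The same holds for $(0,1)$, where $v_\mathfrak p(\mathfrak n^{n-1})=n-1$ and hence $v_\mathfrak p(\mathfrak a)=n-1$; the factor $N(\mathfrak a)^{-s}=N(\mathfrak p)^{-(n-1)s}$ combines with $1+N(\mathfrak p)^{-(n-2(n-1))s}$ to give a term of size $N(\mathfrak p)^{-s}$. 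So the degree-one terms contribute $2kN(\mathfrak p)^{-1/2-s}$. Writing $N(\mathfrak p)^{-1/2-s}=N(\mathfrak p)^{-1}e^{(1/2-s)\log N(\mathfrak p)}$ and using $|\tfrac12-s|\log N(\mathfrak p)\ll 1$ (which holds because $N(\mathfrak p)\le L_2$), this is
\[
\frac{2k}{N(\mathfrak p)}+O\Big(|\tfrac12-s|\frac{\log N(\mathfrak p)}{N(\mathfrak p)}\Big).
\]
For $a+b=2$, the pair $(1,1)$ makes $\mathfrak m\mathfrak n^{n-1}=\mathfrak p^n$ an $n$-th power. Then $\mathfrak b=\mathfrak p$ and $A_1=(1+N(\mathfrak p)^{-ns})/P_n=1+O(N(\mathfrak p)^{-1})$, which contributes $k^2/N(\mathfrak p)$ up to admissible error. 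The pairs $(2,0)$ and $(0,2)$ carry $A_1\ll N(\mathfrak p)^{-c\Re s}$ with $c\ge 1$, so they are $O(N(\mathfrak p)^{-3/2})$. All terms with $a+b\ge3$ are bounded by $N(\mathfrak p)^{-3/2}$ times a convergent series. The imaginary part of $s$ is harmless, since only $|N(\mathfrak p)^{-s}|$ matters in these bounds.

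The main obstacle is to verify uniformly that $A_1(\mathfrak p^a,\mathfrak p^b,s)\ll N(\mathfrak p)^{-\Re s\cdot m}$ with $m\ge1$ whenever $n\nmid (a+(n-1)b)$, and that the only way to reach total degree $N(\mathfrak p)^{-1}$ is through the three cases above. In general $v_\mathfrak p(\mathfrak a)=r\in\{1,\dots,n-1\}$ gives
\[
N(\mathfrak a)^{-s}\bigl(1+N(\mathfrak p)^{-(n-2r)s}\bigr)\asymp N(\mathfrak p)^{-\min(r,n-r)\Re s},
\]
so the combined exponent of $N(\mathfrak p)^{-1/2}$ is at least $(a+b)+2\min(r,n-r)$. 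This exponent equals $2$ only for $(a,b)\in\{(1,0),(0,1),(1,1)\}$; every other case has exponent at least $3$. The boundary case to check is $r\equiv\pm1 \pmod n$ with $a+b=1$, which is handled above.

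Finally, the sums over $P$ of the $O_k$ terms give the stated error. Note that the $O_k$ bound holds uniformly in the small disk precisely because $N(\mathfrak p)\le L_2$.
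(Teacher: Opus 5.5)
Your proposal is correct and follows essentially the paper's proof. Both arguments factor $T$ and $T'$ into local factors over $\mathfrak p\in P$ and show each factor is $1+\frac{k^2+2k}{N(\mathfrak p)}+O_k\bigl(|\tfrac12-s|\frac{\log N(\mathfrak p)}{N(\mathfrak p)}+N(\mathfrak p)^{-3/2}\bigr)$. The main terms come from $(a,b)=(1,0),(0,1)$ (the paper's $h(\mathfrak p,s)$ and $h(\mathfrak p^{n-1},s)$ terms) and from $(1,1)$ (the $g(\mathfrak p,s)$ term), and $\log(1+x)\le x$ finishes. The paper only differs in grouping the local factor by the residue $a-b \pmod n$ and using the reflection $h(\mathfrak p^{n-j},s)=h(\mathfrak p^{j},s)$.

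One small slip to fix. Since $N(\mathfrak p)^{-\min(r,n-r)\Re s}\asymp (N(\mathfrak p)^{-1/2})^{\min(r,n-r)}$ in your range, the combined exponent of $N(\mathfrak p)^{-1/2}$ is $(a+b)+\min(r,n-r)$, not $(a+b)+2\min(r,n-r)$. With the factor $2$, the pair $(1,0)$ would have exponent $3$, which contradicts your own (correct) explicit computation of that term.
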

\begin{proof}
Let us denote
\[
h(\mathfrak a,s) := \frac{1}{N(\mathfrak a)^{s}}\prod_{\mathfrak p \mid \mathfrak a} \frac{1+N(\mathfrak p)^{-(n-2v_\mathfrak p(\mathfrak a))s}}{P_n(\mathfrak p,s)} \text{ and } g(\mathfrak b,s) := \prod_{\mathfrak p \mid \mathfrak b} \frac{1+N(\mathfrak p)^{-n s}}{P_n(\mathfrak p,s)},
\]
so that $A_1(\mathfrak m, \mathfrak n, s) = g(\mathfrak b,s)h(\mathfrak a,s)$. Also, note the reflection formula $h(\mathfrak p^{n-j},s) = h(\mathfrak p^j,s)$. We have the following estimates, uniform in $s$:
\[
g(\mathfrak p,s) = 1 +O\left(\frac{1}{N(\mathfrak p)}\right)
\]
and for $j < \tfrac n2$
\[
h(\mathfrak p^j, s) = N(\mathfrak p)^{-j/2} + O(j |\tfrac 12-s|\log N(\mathfrak p)N(\mathfrak p)^{-j/2} + N(\mathfrak p)^{-1})
\]
and finally for $j = n/2$: 
\[
h(\mathfrak p^j,s) = 2N(\mathfrak p)^{-j/2}+O(N(\mathfrak p)^{-1}).
\]
Factoring over primes in $P$ gives $T=\prod_{\mathfrak p \in P} T_\mathfrak p$, where
\[
T_\mathfrak p =  1+g(\mathfrak p,s) \sum_{i=2}^\infty \sum_{\substack{a+b=i \\ a \equiv b \bmod n}} \frac{x_\mathfrak p^i}{a!b!} + \sum_{j=1}^{n-1} h(\mathfrak p^j,s)\sum_{i=1}^\infty \sum_{\substack{a+b=i \\ a-b \equiv j \bmod n}} \frac{x_\mathfrak p^i}{a!b!},
\]
where $x_\mathfrak p = \frac{k}{N(\mathfrak p)^{1/2}}$. Inserting the estimates for $g$ and $h$ yields
\[
T_\mathfrak p = 1+ \frac{k^2+2k}{N(\mathfrak p)} + O_k\left( | \tfrac 12-s| \frac{\log N(\mathfrak p)}{N(\mathfrak p)} + \frac{1}{N(\mathfrak p)^{3/2}}\right), 
\]
where the implied constants are uniform in $\mathfrak p$. Hence,
\[
|T_\mathfrak p| \leq \exp \left(\frac{k^2+2k}{N(\mathfrak p)} +  O_k\left(|\tfrac 12 -s| \frac{\log N(\mathfrak p)}{N(\mathfrak p)} + \frac{1}{N(\mathfrak p)^{3/2}}\right)\right).
\]

Consequently,
\[
|T| \leq \exp \left((k^2+2k) \sum_{\mathfrak p \in P} \frac{1}{N(\mathfrak p)} +  O_k\left(\sum_{\mathfrak p \in P} |\tfrac 12 -s| \frac{\log N(\mathfrak p)}{N(\mathfrak p)} + \frac{1}{N(\mathfrak p)^{3/2}}\right)\right).
\]
The proof for $T'$ proceeds identically, since applying the triangle inequality to $|g|$ and $|h|$ gives the same bounds as for $g$ and $h$.
\end{proof}

\end{document}